\numberwithin{equation}{section}
\newtheorem{theorem}{Theorem}
\newtheorem*{theorem*}{Theorem}
\newtheorem{cor}[theorem]{Corollary}
\newtheorem{lemma}[theorem]{Lemma}
\newtheorem{prop}[theorem]{Proposition}
\theoremstyle{definition}
\newtheorem{defi}[theorem]{Definition}
\theoremstyle{remark}
\newtheorem{remark}[theorem]{Remark}
\numberwithin{theorem}{section}
\newcommand {\Gn}{\langle G \rangle}
\newcommand {\Xs}{X_{sep}}
\newcommand {\Z}{\mathbb Z}
\newcommand{\cB}{\mathcal{B}}
\newcommand{\cA}{\mathcal{A}}
\newcommand{\cT}{\mathcal{T}}
\newcommand{\cO}{\mathcal{O}}
\newcommand{\Coprod}{\text{Coprod}}
\newcommand{\Dqc}{\textbf{D}_{\textbf{qc}}}
\newcommand{\Dcoh}{\textbf{D}_{\textbf{coh}}}
\newcommand{\Dperf}{\textbf{D}_{\textbf{perf}}}
\newcommand{\tn}{{\vert\kern-0.25ex\vert\kern-0.25ex\vert}}
\newcommand{\Tn}{{\big\vert\kern-0.25ex\big\vert\kern-0.25ex\big\vert}}    
\newcommand{\TN}{{\Big\vert\kern-0.25ex\Big\vert\kern-0.25ex\Big\vert}}   
\newcommand\restr[2]{{% we make the whole thing an ordinary symbol
  \left.\kern-\nulldelimiterspace % automatically resize the bar with \right
  #1 % the function
  \vphantom{\big|} % pretend it's a little taller at normal size
  \right|_{#2} % this is the delimiter
  }}
\DeclareSymbolFont{fouriersymbols}{FMS}{futm}{m}{n}
\DeclareSymbolFont{fourierlargesymbols}{FMX}{futm}{m}{n}
\DeclareMathDelimiter{\VERT}{\mathord}{fouriersymbols}{152}{fourierlargesymbols}{147}
\begin{document}

\title[Strong Generators in $\textbf{D}_{perf}(X)$ for schemes with a \textit{separator}]{Strong Generators in $\textbf{D}_{perf}(X)$ for schemes with a \textit{separator}}
\subjclass[2010]{Primary 18E30, secondary 18G20.} 
 \keywords{ Derived Categories, compact generators, schemes, separator }
 
\author{V. B. Jatoba}

\address{Department of Mathematics, Statistics, and Computer Science (M/C 249)\\
University of Illinois at Chicago\\
851 S. Morgan St.\\
Chicago, IL 60607-7045\\
USA}
\email{vjatob2@uic.edu}

\thanks{
The author was partly supported by the Brazilian Federal Agency for the Support and Evaluation of Graduate Education (CAPES), for which he is grateful. }

\date{}
\maketitle

\begin{abstract} This paper extends the result from Amnon Neeman \cite{Amnon} regarding strong generators in $\textbf{D}_{perf}(X)$, from $X$ being a quasicompact, separated scheme to $X$ being quasicompact, quasiseparated scheme that admits a \textit{separator}. Neeman's result states a necessary and sufficient condition for $\textbf{D}_{perf}(X)$ being regular. 

Together with being proper over a noetherian commutative ring, those conditions give an interesting description for when an $R$-linear functor $H$ is representable.
\end{abstract}

\section{Introduction.}\label{SectionIntro}

In 2003, Bondal and Van den Bergh in \cite[Theorem 2.2]{BonVanB}, defined what it means for an object to (strongly) generate a triangulated category. The definitions were inspired by the close relation between certain types of triangulated categories having a strong generator and being saturated, i.e., that every contravariant cohomological functor of finite type to vector spaces is representable. Categories that admit a strong generator were then called \textit{regular}.

Moreover, in the same article Bondal and Van den Bergh showed that whenever $X$ is a smooth variety, $\textbf{D}_{\text{perf}}(X)$ is regular if, and only if, $X$ can be covered by open affine subschemes $Spec(R_i)$ with each $R_i$ of finite global dimension. It was then asked by Bondal and Van den Bergh if one could generalize the condition over the scheme to be quasicompact and separated.

Over the next decade, several steps followed in this direction. First, the case where $X$ is regular and of finite type over a field $k$ was proved by both Orlov \cite{Orlov}, Theorem 3.27, and Rouquier \cite{Rouquier}, Theorem 7.38,. This last paper from Rouquier is also responsible for the generality of the following important theorem.

\begin{theorem}[Rouquier] Let $R$ be a neotherian, commutative ring. Let $\cT$ be a regular triangulated category proper over $R$, and suppose that $\cT$ is idempotent complete. Then an $R$-linear functor $H: \cT \rightarrow R-Mod$ is representable if and only if
\begin{itemize}
\item[i)]H is homological, and 
\item[ii)]for any object $X \in \cT$, the direct sum $\oplus^{\infty}_{i=-\infty} H(\Sigma^i X)$ is a finite $R$-module.
\end{itemize}
\end{theorem}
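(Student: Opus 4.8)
The plan is to treat the two implications separately. For the forward direction, if $H \cong \operatorname{Hom}(C,-)$ for some $C \in \cT$, then $H$ is homological since $\operatorname{Hom}(C,-)$ carries distinguished triangles to long exact sequences, and it is $R$-linear because $\cT$ is; condition ii) is then exactly the properness of $\cT$ over $R$, since for every $X$ one has $\bigoplus_{i \in \Z} H(\Sigma^i X) \cong \bigoplus_{i \in \Z} \operatorname{Hom}(C, \Sigma^i X)$, a finite $R$-module by hypothesis. All the substance lies in the converse, which I approach by reconstructing a representing object from the finiteness data.

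For the converse, fix a strong generator $G$, so that $\cT = \langle G\rangle_{N}$ for some $N$. Applying ii) to $X = G$, the $R$-module $\bigoplus_{i} H(\Sigma^i G)$ is finitely generated; being a direct sum, this forces $H(\Sigma^i G) = 0$ for all but finitely many $i$, with each nonzero term a finitely generated $R$-module. The aim is to produce a pair $(C,\eta)$ with $\eta \in H(C)$ for which the Yoneda comparison map $\theta_X \colon \operatorname{Hom}(C,X) \to H(X)$, $f \mapsto H(f)(\eta)$, is a natural isomorphism; by Yoneda such a pair is precisely a representation of $H$.

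I would build $C$ by a finite Postnikov-type tower $C_0 \to C_1 \to \cdots \to C_k = C$ over $G$, attaching shifted copies of $G$ indexed by $R$-module generators of the finitely many nonzero groups $H(\Sigma^i G)$, arranging at each stage that the failure of $\theta$ to be an isomorphism on the objects $\Sigma^i G$ is measured by a strictly smaller finite $R$-module. Properness keeps every Hom-group that appears finite over $R$, while the noetherian hypothesis guarantees that the kernels and cokernels controlling $\theta$ remain finitely generated, so the relevant finiteness invariant strictly decreases and the construction terminates after finitely many steps with $C \in \cT$ and $\theta_{\Sigma^i G}$ an isomorphism for every $i$.

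To promote this to an isomorphism on all of $\cT$, observe that the full subcategory $\{X \in \cT : \theta_X \text{ is an isomorphism}\}$ is closed under shifts, under cones by the five lemma (both $\operatorname{Hom}(C,-)$ and $H$ being homological), and under direct summands (using idempotent completeness); it is therefore thick and contains $G$, hence equals $\langle G\rangle = \cT$, giving $H \cong \operatorname{Hom}(C,-)$. The main obstacle is the construction in the previous step: guaranteeing that the approximating tower stabilizes \emph{inside} $\cT$ rather than requiring an infinite homotopy colimit, which would leave $\Dperf(X)$. This is exactly where the three hypotheses interact and become indispensable — strong generation bounds the number of layers that must be realized, properness forces each comparison module to be finite over $R$, noetherianity makes the descending chain of finiteness invariants terminate, and idempotent completeness allows the summands produced by the attaching maps to be split off within $\cT$.
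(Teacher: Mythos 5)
First, note that the paper itself offers no proof of this statement: it is quoted as background and attributed to Rouquier \cite{Rouquier}, so your attempt has to be judged against the standard argument (Bondal--Van den Bergh, Rouquier). Your forward direction is correct, and your final globalization step is also correct as far as it goes: the full subcategory of objects $X$ for which $\theta_X$ is an isomorphism is thick (shifts, cones via the five lemma, summands), so \emph{if} you could produce $(C,\eta)$ with $\theta_{\Sigma^i G}$ an isomorphism for every $i$, representability would follow. The problem is that the entire weight of the theorem rests on producing such a pair, and the mechanism you propose for it does not work.

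Concretely, you claim the approximating tower terminates because the failure of $\theta$ to be an isomorphism on the $\Sigma^i G$ is "a strictly smaller finite $R$-module" at each stage, and that "noetherianity makes the descending chain of finiteness invariants terminate." This is backwards: noetherian means the \emph{ascending} chain condition, and finitely generated modules over a noetherian ring admit infinite strictly descending chains (e.g. $\Z \supsetneq 2\Z \supsetneq 4\Z \supsetneq \cdots$), so no argument of the form "a finite invariant strictly decreases" can terminate on noetherianity alone. Worse, there is no such decreasing invariant in the first place: when you attach shifted copies of $G$ to kill $\ker(\theta_k)$ on the objects $\Sigma^i G$, the new triangle $C_{k+1} \to C_k \to D_k$ contributes fresh classes in $\operatorname{Hom}(C_{k+1},\Sigma^i G)$ coming from $\operatorname{Hom}(\Sigma^{-1}D_k,\Sigma^i G)$, and nothing controls whether these land in the kernel of $\theta_{k+1}$; killing the old kernel creates new kernel. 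This is precisely the known difficulty, and it is why the actual proofs never make $\theta$ an isomorphism on the generators by a terminating cell attachment. Instead, one uses strong generation — the number of steps is bounded in advance by the generation index $N$, not by any descending chain — to produce an \emph{epimorphism} of functors $\operatorname{Hom}(A,-) \twoheadrightarrow H$, and then a finite presentation $\operatorname{Hom}(B,-) \to \operatorname{Hom}(A,-) \to H \to 0$. Writing the presenting map as $\phi^*$ for some $\phi \colon A \to B$ (Yoneda) and completing $\phi$ to a triangle $A \to B \to C$, exactness of $\operatorname{Hom}$ on triangles exhibits $H$ as a subfunctor, and in fact a direct summand, of the representable functor $\operatorname{Hom}(\Sigma^{-1}C,-)$; idempotent completeness is then invoked exactly once, at the very end, to split the resulting idempotent of $\Sigma^{-1}C$ inside $\cT$ and obtain the representing object — not, as you suggest, to "split off the summands produced by the attaching maps." Your sketch is thus missing the two ideas that make the theorem true: the epimorphism/finite-presentation lemma proved by induction on the generation index, and the formal fact that a finitely presented homological functor is a direct summand of a representable one.
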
 

This motivate finding examples of regular, idempotent complete triangulated categories proper over a noetherian ring $R$. In particular, it is a well-known fact that the category $\textbf{D}_{\text{perf}}(X)$, for $X$ a quasicompact, quasiseparated scheme, is idempotent complete.

In 2017, Neeman \cite{Amnon} proved the Bondal and Van den Bergh conjecture, i.e.

\begin{theorem}[Neeman] \label{IntroTheo1} Let $X$ be a quasicompact, separated scheme. Then $\textbf{D}_{\text{perf}}(X)$ is regular if, and only if, $X$ can be covered by open affine subschemes $Spec(R_i)$, with each $R_i$ of finite global dimension.
\end{theorem}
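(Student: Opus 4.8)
The plan is to prove the two implications separately, treating the reverse (sufficiency) direction as the substantial one. Throughout I write $\langle G \rangle_n$ for the objects built from $G$ in at most $n$ cones (together with shifts, finite sums, and summands), so that regularity of $\textbf{D}_{\text{perf}}(X)$ means $\langle G \rangle_n = \textbf{D}_{\text{perf}}(X)$ for some object $G$ and some finite $n$.

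For the \emph{only if} direction, I would argue that regularity localizes to open affines. Fix a strong generator, so $\langle G \rangle_n = \textbf{D}_{\text{perf}}(X)$, and let $j \colon U = Spec(R) \hookrightarrow X$ be an open affine. The derived pullback $j^* \colon \textbf{D}_{\text{perf}}(X) \to \textbf{D}_{\text{perf}}(U)$ is a triangulated functor, and by Thomason--Trobaugh every perfect complex on $U$ is a direct summand of the restriction of a perfect complex on $X$; since $\langle - \rangle_n$ is carried into $\langle - \rangle_n$ by triangulated functors and is closed under summands, this yields $\langle j^* G \rangle_n = \textbf{D}_{\text{perf}}(U)$. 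Thus $\textbf{D}_{\text{perf}}(R)$ is regular, and it remains to deduce that $R$ has finite global dimension. Here I would use that any perfect complex over $R$ lies in $\langle R \rangle_m$ for some finite $m$ (it is a bounded complex of finite projectives), so a strong generator in $n$ steps upgrades to $\langle R \rangle_{mn} = \textbf{D}_{\text{perf}}(R)$; a finite generation time for $R$ itself is, via Rouquier's comparison between the dimension of $\textbf{D}_{\text{perf}}(R)$ and $\mathrm{gl.dim}(R)$, equivalent to $\mathrm{gl.dim}(R) < \infty$.

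For the \emph{if} direction, I would induct on the size of a finite affine cover $X = U_1 \cup \dots \cup U_r$ (finite by quasicompactness), each $U_i = Spec(R_i)$ with $R_i$ of finite global dimension, so that each $\textbf{D}_{\text{perf}}(U_i)$ is regular by the affine case already discussed. The base case $r=1$ is immediate. For the inductive step, set $U = U_r$ and $V = U_1 \cup \dots \cup U_{r-1}$. Then $V$ is covered by $r-1$ affines of finite global dimension, and $U \cap V = \bigcup_{i<r}(U_r \cap U_i)$ is covered by the $U_r \cap U_i$, which are affine by separatedness and again of finite global dimension (localizing preserves finiteness of global dimension, as one checks on local rings). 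Hence $\textbf{D}_{\text{perf}}(V)$ and $\textbf{D}_{\text{perf}}(U \cap V)$ are regular by the inductive hypothesis. The heart of the argument is then a Mayer--Vietoris step: the Thomason--Trobaugh localization sequence relating $\textbf{D}_{\text{perf}}(X)$, $\textbf{D}_{\text{perf}}(U)$, $\textbf{D}_{\text{perf}}(V)$, and $\textbf{D}_{\text{perf}}(U \cap V)$ allows one to assemble a global strong generator out of the local ones.

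The main obstacle is precisely this last assembly: strong generation is not \emph{a priori} a local property, since reconstructing a global object from its restrictions to $U$ and $V$ can cost additional cones, and one must certify that the generation time stays uniformly bounded rather than growing without control. This is where the approximation machinery carries the real weight, by tracking how the generation time propagates along the localization sequence, and it is the step I expect to demand the most care; the roles of quasicompactness (finiteness of the cover) and separatedness (affineness of the intersections $U_r \cap U_i$) enter exactly to keep this bookkeeping finite.
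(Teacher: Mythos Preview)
Your ``only if'' direction matches the paper exactly: Remark~\ref{rmk1.3} argues via Thomason--Trobaugh that $j^*G$ strongly generates $\textbf{D}_{\text{perf}}(U)$, then invokes \cite[Theorem~7.25]{Rouquier} to conclude finite global dimension.

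For the ``if'' direction, your overall architecture (induction on the size of an affine cover, with a Mayer--Vietoris/localization step) is the same as Neeman's, but you are missing the one idea that makes the bookkeeping you worry about actually close. The paper does not reprove Theorem~\ref{IntroTheo1}, but the machinery it imports from \cite{Amnon} (Section~\ref{SubSecAmnons}, Theorems~\ref{Amn6.2} and~\ref{Amnthm}, Proposition~\ref{propreduc}) makes the method visible: one does \emph{not} work inside $\textbf{D}_{\text{perf}}(X)$ at all during the induction. Instead one passes to the big category $\Dqc(X)$, replaces $\langle G\rangle_n$ by the coproduct-closed version $\Coprod_N(G[A,B])$, and proves that $\Dqc(X)$ is \emph{fast generated} by a compact object. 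Only at the very end does one descend to $\textbf{D}_{\text{perf}}(X)$ via the compactness argument of Corollary~\ref{lemma1.8}/Proposition~\ref{propreduc}.

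The reason this passage is not optional is exactly the obstacle you flag: in your Mayer--Vietoris step the natural thing to do is push forward along the open immersion $j\colon V\hookrightarrow X$, but $\mathbf{R}j_*$ of a perfect complex is typically not perfect, so there is no way to track generation time purely inside $\textbf{D}_{\text{perf}}$. In $\Dqc$ this is harmless, and Theorem~\ref{Amn6.2} is precisely the statement that $\mathbf{R}j_*$ sends $\Coprod_n(H[a,b])$ into $\Coprod_N(G[A,B])$ for controlled $N,A,B$. That is the ``approximation machinery'' you allude to; without naming it your inductive step remains a hope rather than an argument.
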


\begin{remark}\label{rmk1.3}
One direction of the Theorem \ref{IntroTheo1} has been proven in full generality. If $\textbf{D}_{\text{perf}}(X)$ is regular, one may show that if $U = Spec(R)$ is any open affine subscheme of $X$, then $R$ is of finite global dimension. This claim follows by Thomason and Trobaugh \cite{ThTro}, which shows that the restriction functor $j^*: \textbf{D}_{\text{perf}}(X) \to \textbf{D}_{\text{perf}}(U)$ is the idempotent completion of the Verdier quotient map. If $G \in \textbf{D}_{\text{perf}}(X)$ is a strong generator, then $j^* G \in \textbf{D}_{\text{perf}}(U)$ is also a strong generator. By \cite[Theorem 7.25]{Rouquier}, this implies that $R$ must be of finite global dimension. 
\end{remark}

One might ask if the separated condition could be weakened to quasiseparated. As shown above, one of the main applications involves idempotent complete triangulated categories, and $\textbf{D}_{\text{perf}}(X)$ is an idempotent complete triangulated category, for $X$ a quasicompact, quasiseparated scheme. 

This paper gives one step in this direction, extending Theorem \ref{IntroTheo1}. We show that for quasicompact, quasiseparated schemes that admit a \textit{separator,} the theorem hold.

\begin{theorem} \label{main} Let $X$ be a quasicompact, quasiseparated scheme that admits a separator. Then $\textbf{D}_{\text{perf}}(X)$ is regular if, and only if, $X$ can be covered by open affine subschemes $Spec(R_i)$ with each $R_i$ of finite global dimension.
\end{theorem}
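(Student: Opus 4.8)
My plan is to treat the two implications separately, since the forward implication is already available in full generality. For the \emph{only if} direction nothing new is required: Remark \ref{rmk1.3} records that whenever $\textbf{D}_{\text{perf}}(X)$ is regular, every open affine $\mathrm{Spec}(R)\subseteq X$ has $R$ of finite global dimension, the argument running through Thomason--Trobaugh \cite{ThTro} and \cite[Theorem 7.25]{Rouquier} without any separatedness hypothesis. Thus the real content is the \emph{if} direction: assuming $X$ is covered by open affines $\mathrm{Spec}(R_i)$ with $\mathrm{gl.dim}\,R_i<\infty$, I must exhibit a strong generator of $\textbf{D}_{\text{perf}}(X)$.

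The strategy is to transport the problem across the separator into the separated world, where Neeman's Theorem \ref{IntroTheo1} applies, and then pull the resulting strong generator back. Writing $\pi\colon X\to X_{sep}$ for the morphism supplied by the separator, the first step is to check that $X_{sep}$ again admits an affine cover by spectra of rings of finite global dimension; this should follow from the defining properties of the separator, since the relevant open affines of $X$ and $X_{sep}$ correspond under $\pi$ and finite global dimension is a local condition. Granting this, $X_{sep}$ is a quasicompact separated scheme covered by affines of finite global dimension, so Theorem \ref{IntroTheo1} produces a strong generator $G\in\textbf{D}_{\text{perf}}(X_{sep})$, that is, an integer $n$ with $\langle G\rangle_{n}=\textbf{D}_{\text{perf}}(X_{sep})$.

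It then remains to transfer $G$ across $\pi$. The candidate generator on $X$ is $L\pi^{*}G$, which lands in $\textbf{D}_{\text{perf}}(X)$ provided $\pi$ has finite Tor-dimension, a property I expect the separator to guarantee. Since $L\pi^{*}$ is exact it carries the $n$-step generation of $G$ to an $n$-step generation of its essential image: every object of the form $L\pi^{*}E$ lies in $\langle L\pi^{*}G\rangle_{n}$. The crux is therefore to show that the essential image of $L\pi^{*}$ classically generates $\textbf{D}_{\text{perf}}(X)$ in a \emph{bounded} number of further steps, whence $L\pi^{*}G$ strongly generates $\textbf{D}_{\text{perf}}(X)$. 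I would establish this by a Mayer--Vietoris induction over a finite affine cover of $X_{sep}$: the preimages under $\pi$ give a cover of $X$ by quasicompact quasiseparated pieces of strictly smaller complexity, and the projection formula together with control on $R\pi_{*}\mathcal{O}_{X}$ lets one reassemble an arbitrary perfect complex on $X$ from pullbacks in uniformly many cones.

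The main obstacle will be precisely this uniform bound in the transfer step. Exactness of $L\pi^{*}$ disposes of the objects in its image for free, but bounding the number of triangles needed to build \emph{all} of $\textbf{D}_{\text{perf}}(X)$ from that image is where the geometry of the separator must enter essentially: one needs $\pi$ to be well behaved enough (finite Tor-dimension, with $R\pi_{*}\mathcal{O}_{X}$ perfect, or at least bounded) that the descent along the affine cover of $X_{sep}$ terminates after finitely many steps independent of the chosen complex. This is also the only place where quasiseparatedness, rather than the mere formal existence of the separator, genuinely intervenes, through the quasicompactness of the intersections appearing in the Mayer--Vietoris patching.
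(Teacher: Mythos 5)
Your opening moves match the paper: the easy direction is indeed just Remark \ref{rmk1.3}; the separator restricts to an isomorphism on separated (in particular affine) opens, so $\Xs$ inherits a cover by affines of finite global dimension (Proposition \ref{hopenimm}); and the pullback of Neeman's generator is the right candidate. (Your hedge about Tor-dimension is unnecessary: a separator is a local isomorphism, hence flat, so $f^*$ is exact and preserves perfection.) But at the step you yourself call the crux --- the uniform bound needed to build all of $\textbf{D}_{\text{perf}}(X)$ from the image of $f^*$ --- your plan has concrete defects. First, the Mayer--Vietoris induction cannot be run inside $\textbf{D}_{\text{perf}}(X)$: the terms of the relevant triangles are pushforwards such as $u_*u^*F$ and $t_*t^*F$ along open immersions, and these are not perfect in general. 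This is exactly why the paper works in the big category $\Dqc(X)$ with the notion of \emph{fast generation} ($\Coprod_n(G[-\infty,\infty])$, allowing arbitrary coproducts), and only at the end descends to $\textbf{D}_{\text{perf}}(X)$ by a compactness argument (Corollary \ref{lemma1.8} and Proposition \ref{propreduc}). Second, your induction is over an affine cover of $\Xs$, claiming the preimages under $\pi$ have ``strictly smaller complexity''; but the preimage of a single affine open of $\Xs$ can already be an arbitrary non-separated quasicompact quasiseparated scheme with a separator onto an affine (the doubled affine line over the affine line is of this shape), so your base case is as hard as the full theorem and the induction never bottoms out. The paper instead inducts on the number of affines covering $X$ itself, where the base case is genuinely trivial: an affine $X$ is separated, so the separator is an isomorphism.

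What replaces your appeal to ``the projection formula together with control on $R\pi_*\mathcal{O}_X$'' are two specific facts exploiting the local-isomorphism property of $f$, and neither follows from what you propose. (i) Proposition \ref{prop3.2}: for $P$ supported on the closed complement $Z\subset V$, the pushforward $i_*P$ is a \emph{retract} of $f^*j_*P$, because pulling back along $f$ reproduces $i_*P$ plus a summand supported away from $V$; combined with Neeman's boundedness theorem on the separated side (Theorem \ref{Amn6.2}), this is what places the gluing data $u_*o^*G$ and $i_*j^*G$ into $\Coprod_N(f^*G)$ with a controlled $N$ (Proposition \ref{prop3.4}). (ii) Proposition \ref{lem3.3}: $f^*G$ is a compact \emph{generator} of $\Dqc(X)$, itself a delicate induction requiring an analysis of the locus of non-separated points. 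Without these two inputs you have no mechanism for the uniform bound, so while your skeleton (transfer the cover, invoke Neeman on $\Xs$, pull back, bound the transfer) coincides with the paper's, the proof of the crucial bound is missing, and the route you sketch for it does not close.
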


A separator is a morphism with some universal property from a quasicompact, quasiseparated scheme to a particular quasicompact separated scheme, introduced by Ferrand and Khan in \cite{Separator}.

One direction of the proof of theorem \ref{main} is identical to the Remark \ref{rmk1.3}, so it remains to show that $\textbf{D}_{\text{perf}}(X)$ is regular if $X$ can be covered by affines of finite global dimension. With the assumption of the existence of a $separator$, the main idea is to pull back the strong generator from the separated scheme and showing that it is again a strong generator in $\Dqc(X)$. Not all quasiseparated schemes admits a separator, but several examples may be found in \cite{Separator}.

%\begin{remark} The author was partly supported by the Brazilian Federal Agency for the Support and Evaluation of Graduate Education (CAPES), for which he is grateful to. 
%\end{remark}

\section{Preliminaries.}\label{SecBackground}

\subsection{Local Isomorphism}\label{SubSecLocIso} 

This section follows \cite[Apendix A]{Separator}. Another main source, as usual, is \cite[§4.4, §5]{EGA I}. This is an important property to introduce because the separator is a local isomorphism, which turns out to simplify the proof of the main theorem \ref{thm3.5} in Section 3. 

%Intuitively speaking,a local isomorphismn carries the idea of gluing some affine covering from a quasi-separated scheme in such way that one separated scheme is made. So first, a definition.

\begin{defi} [\textbf{Local Isomorphism}] A morphism of schemes $f: X \to Y$ is a \textit{local isomorphism} if every point of $X$ is contained in an open $U \subset X$ such that $f$ induces an open immersion $U \to Y$.
\end{defi}

Local isomorphisms translate the idea of gluing open sets or covering spaces. If $U, V$ are opens of $X$ such that $f$ induces an open immersion, the image of $f( U \cup V)$ is obtained by gluing $f(U)$ and $f(V)$ --- which are isomorphic to $U$ and $V$ respectively --- along the open set $f(U) \cap f(V)$, which contains $f(U \cap V)$.

A local isomorphism is necessarily open, flat and locally of finite presentation \cite[6.2.1]{EGA I}. From the local property of the morphism it also follows that for any point $x \in X$, the induced morphism, $\theta_x \colon \cO_{Y,f(x)} \to \cO_{X,x}$ is an isomorphism. The reverse direction also holds: if $f$ is locally of finite presentation, and $\theta_x$ is an isomorphism for all $x \in X$, then $f$ is a local isomorphism \cite[6.6.4]{EGA I}. 

\begin{prop}
\label{LocIso1}
\cite[Proposition A.3.1]{Separator}
Let $f: X \to Y$ be a separated local isomorphism. If $f$ induces an injection over all maximum points of $X$, then $f$ is an open immersion.
\end{prop}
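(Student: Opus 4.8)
The plan is to show that $f$ is a monomorphism; since a local isomorphism is flat, locally of finite presentation, and induces isomorphisms $\theta_x$ on all local rings, an injective local isomorphism is automatically an open immersion (it is an open map, a bijection onto its open image, and an isomorphism on all stalks). So the entire content is to upgrade injectivity on maximal points to a monomorphism statement everywhere, and this is where separatedness must enter.

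I would work with the fibre product $Z := X \times_Y X$, the two projections $p_1, p_2 \colon Z \to X$, and the diagonal $\Delta \colon X \to Z$. Two observations set up the argument. First, because $f$ is a local isomorphism it is unramified (one has $\Omega_{X/Y} = 0$, as $\theta_x$ is an isomorphism for every $x$), so $\Delta$ is an open immersion; because $f$ is separated, $\Delta$ is also a closed immersion. Hence $\Delta(X)$ is a clopen subscheme of $Z$, giving a decomposition $Z = \Delta(X) \sqcup Z'$ with $Z'$ open and closed. Now $f$ is a monomorphism exactly when $Z' = \emptyset$, so it suffices to prove $Z' = \emptyset$. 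Second, since $p_1$ and $p_2$ are base changes of $f$, they are themselves local isomorphisms, in particular flat; flatness lets generizations lift, so every maximal point of $Z$ maps under $p_1$ and under $p_2$ to a maximal point of $X$.

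Now suppose $Z' \neq \emptyset$. Being a nonempty open (and closed) subset, $Z'$ contains a maximal point $z$ of $Z$: it meets some irreducible component $C$ of $Z$, and since $Z'$ is closed the whole of $C$, hence its generic point, lies in $Z'$. Set $x_1 = p_1(z)$ and $x_2 = p_2(z)$; by the previous paragraph these are maximal points of $X$, and they satisfy $f(x_1) = f(x_2)$ because $f \circ p_1 = f \circ p_2$. The hypothesis that $f$ is injective on maximal points then forces $x_1 = x_2 =: x$. I would finish by invoking the local isomorphism property at $x$: choose an open $U \ni x$ with $f|_U \colon U \to Y$ an open immersion onto $V \subseteq Y$. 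Then $W := p_1^{-1}(U) \cap p_2^{-1}(U)$ is an open neighborhood of $z$ in $Z$, and over it $Z$ restricts to $U \times_V U$; since $U \to V$ is a monomorphism, this fibre product is $U$ embedded via the diagonal, so $W \subseteq \Delta(X)$. This contradicts $z \in Z'$. Hence $Z' = \emptyset$, $\Delta$ is an isomorphism, $f$ is a monomorphism, and therefore an open immersion.

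The main obstacle I anticipate is this last, scheme-theoretic step: translating the purely topological equality $x_1 = x_2$ of points into the conclusion that $z$ genuinely lies on the diagonal. The clopen decomposition of $Z$ coming from the open/closed diagonal is what makes this possible, reducing the whole question to maximal points; but one must be careful that (i) maximal points of $Z$ really push forward to maximal points of $X$ (the flatness/generization argument) and (ii) the local model $U \times_V U \cong U$ is correctly identified with an open piece of $\Delta(X)$, so that the neighborhood $W$ of $z$ truly lands in the diagonal. Everything else — openness, the stalk isomorphisms, and the passage from \emph{injective local isomorphism} to \emph{open immersion} — is formal.
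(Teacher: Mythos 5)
The paper does not actually prove this proposition; it imports it verbatim from Ferrand--Kahn \cite[Proposition A.3.1]{Separator}, so your attempt can only be measured against that source, not against anything in the text. Your proof is correct and is essentially the expected argument: the diagonal $\Delta \colon X \to Z := X \times_Y X$ is an open immersion because a local isomorphism is unramified, and a closed immersion because $f$ is separated; the projections $p_1, p_2$ are flat (base changes of $f$), so generizations lift and maximal points of $Z$ lie over maximal points of $X$; injectivity on maximal points then puts any maximal point $z$ of $Z$ over a single maximal point $x$, and the local computation $p_1^{-1}(U) \cap p_2^{-1}(U) = U \times_Y U = U \times_V U \cong \Delta(U)$ (legitimate because $V \hookrightarrow Y$ is a monomorphism and $U \to V$ is an isomorphism) forces $z$ to lie on the diagonal. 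You are also right that the unramifiedness half of the clopen statement is not decorative: a surjective \emph{closed} immersion need not be an isomorphism, so it is precisely the fact that $\Delta$ is an \emph{open} immersion that converts ``$\Delta$ has full image'' into ``$\Delta$ is an isomorphism,'' hence ``$f$ is a monomorphism,'' after which monomorphism $+$ local isomorphism $=$ open immersion is formal, as you say.

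One slip should be repaired, though it does not damage the proof. To produce a maximal point of $Z$ inside $Z'$ you write that $Z'$ meets a component $C$ and ``since $Z'$ is closed the whole of $C$ \dots lies in $Z'$.'' Closedness alone gives no such thing: a closed set can meet an irreducible component without containing it (a closed point of $\mathbb{A}^1$ already shows this). What you need is that $Z'$ is \emph{open}, hence stable under generization, so it contains the generic point of any component it meets; equivalently, $Z' \cap C$ is a nonempty clopen subset of the irreducible space $C$ and therefore equals $C$. With that correction the argument is complete; in fact you could streamline it by dropping the contradiction: show every maximal point of $Z$ lies in $\Delta(X)$, note that maximal points are dense while $\Delta(X)$ is closed, and conclude $\Delta(X) = Z$ directly.
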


\subsection{Separator.}\label{SubSecSep} 

A separator of a morphism $f: T \to S$ is another morphism $h$, which is universal among morphisms from $T$ to separated $S$-schemes $E$. This section will follow \cite{Separator}, which contains in-depth explanations and further properties of separators and local isomorphisms. 

\begin{defi}
\label{Def2.1}
Let $f:T \to S$ a morphism of schemes. A \textit{separator} of $f$, or a \textit{separator} through $f$, is a morphism of $S$-schemes $h: T \to E$, with $E$ separated over $S$, such that the following propreties are satisfied:
\begin{itemize}
\item[i)] $h$ is a quasicompact, quasiseparated, surjective local isomorphism, and
\item[ii)] the diagonal morphism $\Delta_h$ is schematic dominant.
\end{itemize}
\end{defi}

If $S = Spec(\Z)$, we call $h$ a separator of $T$. We observe that

\begin{itemize}
\item[i)] A morphism $f \colon Y \rightarrow X$ is \textit{schematic dominant} if $\cO_X \rightarrow f_*(\cO_Y)$ is injective.
\item[ii)] A morphism $f : T \to S$ that admits a separator is quasiseparated. This follows from the fact that the diagonal $\Delta_f$ factorize as
\begin{align*}
 T \xrightarrow{\Delta_h} T \times_E T \xrightarrow{u} T \times_S T
\end{align*}
where $u$ is the induced morphism from the base change. Since $\Delta_h$ is quasicompact, as $h$ is quasiseparated and $u$ is a closed immersion, the composition is quasicompact.
\item[iii)] If $T$ is integral, property $i)$ of Definition \ref{Def2.1} implies property $ii)$ of Definition \ref{Def2.1}.
\end{itemize}

The separator has several desired properties, all of which have an in-depth explanation in \cite{Separator}. We are only interested in the following:

\begin{prop} \label{hopenimm} Let $f: T \to S$ be a morphism and $T \xrightarrow{h} E \xrightarrow{g} S$ a separator of $f$.
\begin{itemize}
\item[i)] Let $U$ be an open set of $T$ that is separated over $S$. Then the restriction of $h$ induces an isomorphism of $U$ to $h(U)$. In particular, $h(U)$ is open and if $T$ is already separated, $h$ is an isomorphism.
\item[ii)](Universal Property) For all $S$-morphisms $h': T \to E'$ with $E'$ separated over $S$, there exists a unique $S$-morphism $u : E \to E'$ such that $h'=uh$.
\end{itemize}
\end{prop}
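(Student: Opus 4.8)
The plan is to exploit the two structural features a separator enjoys beyond being a surjective local isomorphism: since a local isomorphism is unramified, the diagonal $\Delta_h \colon T \to T \times_E T$ is an \emph{open} immersion, and by hypothesis it is moreover schematic dominant. Both parts of the statement will be reduced to these two facts together with Proposition \ref{LocIso1}.

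For part (i), I would first restrict $h$ to $U$ and observe that $h|_U \colon U \to E$ is again a local isomorphism, the property being stable under restriction to an open of the source. It is also separated: the structure morphism $U \to S$ factors as $U \xrightarrow{h|_U} E \xrightarrow{g} S$, and since $U \to S$ is separated, the cancellation property forces $h|_U$ to be separated. By Proposition \ref{LocIso1} it then suffices to check that $h|_U$ is injective on the maximal points of $U$. I would argue this by contradiction: if two distinct maximal points $\eta_1 \neq \eta_2$ of $U$ had a common image $e \in E$, choose opens $V_1, V_2 \subseteq U$ through $\eta_1, \eta_2$ on which $h$ is an open immersion onto $W_1, W_2 \subseteq E$. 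Then $V_1 \times_E V_2 \cong W_1 \cap W_2$ is an open of $T \times_E T$ in which the point $(\eta_1, \eta_2)$ corresponds to $e$; since $e$ is a maximal point of each $W_i$, it is maximal in $W_1 \cap W_2$, so $(\eta_1, \eta_2)$ is a maximal point of $T \times_E T$ lying off the diagonal. But a schematic dominant open immersion has scheme-theoretically dense image, hence its image contains every maximal point of the target, contradicting $(\eta_1,\eta_2) \notin \Delta_h(T)$. Therefore $h|_U$ is an open immersion, yielding the isomorphism $U \xrightarrow{\sim} h(U)$ with $h(U)$ open. The final assertion follows by taking $U = T$ when $T$ is separated: $h$ is then an open immersion which is surjective by Definition \ref{Def2.1}, hence an isomorphism.

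For part (ii), I would build $u$ locally and glue. Surjectivity of $h$ together with the local isomorphism property means $E$ is covered by opens $W_i = h(V_i)$ with $h|_{V_i} \colon V_i \xrightarrow{\sim} W_i$; set $u_i = h' \circ (h|_{V_i})^{-1} \colon W_i \to E'$. The crux is that the $u_i$ agree on overlaps. Consider the two composites $h'\circ \mathrm{pr}_1,\ h'\circ \mathrm{pr}_2 \colon T \times_E T \to E'$; because $E'$ is separated over $S$ their equalizer is a \emph{closed} subscheme $Z \subseteq T \times_E T$, and it contains $\Delta_h(T)$ since $h'\,\mathrm{pr}_1\,\Delta_h = h' = h'\,\mathrm{pr}_2\,\Delta_h$. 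Schematic dominance of $\Delta_h$ forces the defining ideal of $Z$ into the (zero) kernel of $\cO_{T\times_E T} \to (\Delta_h)_*\cO_T$, so $Z = T \times_E T$ and $h'\,\mathrm{pr}_1 = h'\,\mathrm{pr}_2$. Applying this to the two sections $W_i \cap W_j \to T$ furnished by $(h|_{V_i})^{-1}$ and $(h|_{V_j})^{-1}$ shows $u_i = u_j$ there, so the $u_i$ glue to an $S$-morphism $u$ with $uh = h'$. For uniqueness I would note that $h$ is itself schematic dominant: being a surjective local isomorphism, $\cO_E \to h_*\cO_T$ is injective, as on each chart $W_i$ it is the identity. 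Hence any two $S$-morphisms agreeing after precomposition with $h$ have equalizer a closed subscheme of $E$ through which $h$ factors, forcing it to be all of $E$.

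The main obstacle, present in both parts, is extracting a \emph{scheme-theoretic} conclusion from generic/topological data: in (i) translating injectivity on maximal points into the statement that a schematically dense open contains all maximal points of $T \times_E T$, and in (ii) running the equalizer argument so that schematic dominance of $\Delta_h$ — rather than mere topological density of the diagonal — is what closes the gap. Care is also needed to confirm that the points $(\eta_1,\eta_2)$ are genuinely maximal in $T \times_E T$ and genuinely off the diagonal, which is precisely where the open-immersion charts $V_1 \times_E V_2 \cong W_1 \cap W_2$ do the work.
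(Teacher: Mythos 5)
Your part (i) follows the same skeleton as the paper --- reduce to Proposition \ref{LocIso1} after checking that $h|_U$ is separated (your cancellation argument is fine) and that $h|_U$ is injective on maximal points --- but it has a genuine gap exactly at the step the paper outsources to \cite[2.2.1]{Separator}. Your lift of $(\eta_1,\eta_2)$ to a maximal point of $T\times_E T$ lying off the diagonal is correct, and so is the observation that $\Delta_h$ is an open immersion (a local isomorphism is unramified). The problem is the final assertion: ``a schematic dominant open immersion has scheme-theoretically dense image, hence its image contains every maximal point of the target.'' That implication is false in general. Take $X = Spec(\prod_{i\in\N} k)$ and let $U\subset X$ be the union of the basic opens cut out by the standard idempotents (the principal ultrafilter points): every point of $X$ is maximal (the ring is von Neumann regular), $\cO_X \to j_*\cO_U$ is injective --- in fact an isomorphism on every basic open --- yet $U$ misses every non-principal point. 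What rescues your argument is a hypothesis you never invoke: by Definition \ref{Def2.1} the morphism $h$ is quasi-separated, so $\Delta_h$ is a \emph{quasi-compact} open immersion, i.e.\ its image is retrocompact. For a retrocompact schematically dense open the claim is true, but it still requires a proof, for instance: if a maximal point $\eta$ lay outside the open, then on an affine $Spec(A)$ containing $\eta$ the open meets $Spec(A)$ in a finite union $D(f_1)\cup\dots\cup D(f_n)$ with every $f_i$ in the minimal prime $\mathfrak{p}$ of $\eta$ (opens are stable under generization); each $f_i$ is nilpotent in $A_{\mathfrak{p}}$, so there exists $s\notin\mathfrak{p}$ with $s f_i^{N_i}=0$ for all $i$, and this nonzero $s$ restricts to zero on the open, contradicting injectivity of $\cO_X\to j_*\cO_U$. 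Without this argument (or a citation replacing it, as in the paper), part (i) is not established.

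Part (ii), by contrast, is correct and takes a genuinely different route from the paper. The paper builds $w\colon T\times_E T \to T\times_{E'}T$ from uniqueness of schematic closure \cite[A.5.3]{Separator} and then reads $u$ off a coequalizer diagram, tacitly using that $h$ is an effective epimorphism; you instead construct $u$ by hand on charts $W_i\cong V_i$ and settle both gluing and uniqueness with a single mechanism: the equalizer of two $S$-morphisms into the separated $E'$ is a closed subscheme, and a closed subscheme through which a schematically dominant morphism factors must be everything. This is self-contained and arguably fills in a step the paper leaves implicit. Two small repairs: on a chart, $\cO_E\to h_*\cO_T$ is split injective via the section $(h|_{V_i})^{-1}$ rather than ``the identity,'' since $h^{-1}(W_i)$ is in general strictly larger than $V_i$; and you should take the $V_i$ to be \emph{all} opens on which $h$ is an open immersion, so that they cover $T$ as well as $E$ and the identity $uh=h'$ can be verified chart by chart. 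Both are cosmetic.
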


\begin{proof}
\
\begin{itemize}

\item[i)]
Let $U$ be an open set of $T$ that is separated over $S$. First notice the morphism $U \to E$ induced from $h$ is a separated morphism, since both $U$ and $E$ are, and $h$ is quasi-separated. Second, by the definition of a separator, $\Delta_h$ is schematicaly dominant. By \cite[2.2.1]{Separator}, this implies that the restriction of $h$ to all maximal points of $T$ is injective. Hence by Proposition \ref{LocIso1}, $h$ is an open immersion.

\item[ii)]
Let $h':T \to E'$ be an $S$-morphism with $E'$ separated over $S$. Then, there exists a commutative diagram

$$
\begin{tikzcd}[column sep=5pc]
  T \arrow{d}[swap]{\Delta_{h'}} \arrow{r}{\Delta_h} & T \times_E T \arrow{d}{\phi} \arrow[dashrightarrow]{dl}{\exists w} \\
  T \times_{E'} T \arrow{r}[swap]{{\phi'}} & T \times_S T
\end{tikzcd}
$$
where the morphisms $\phi, \phi'$ are closed immersions, since both $E$ and $E'$ are separated over $S$. Since $\Delta_h$ is schematic dominant by assumption, the conditions on both $\phi$ and $\phi'$,  the requirements for the existence of $w$ are met by the Uniqueness of Schematic Closure \cite[A.5.3]{Separator}.

Hence the diagram 

$$
\begin{tikzcd}[column sep=5pc]
  T \times_E T \arrow{d}[swap]{w} \arrow[r, shift left] \arrow[r, shift right] & T \arrow[d,equal] \arrow{r}{h} & E \arrow[dashrightarrow]{d}{u} \\
  T \times_{E'} T \arrow[r, shift left] \arrow[r, shift right] & T \arrow{r}[swap]{h'} & E'
\end{tikzcd}
$$

commutes, and the result follows.

\end{itemize} \end{proof}

Finally, it is important to understand when a separator exists. The following theorem will give some criteria to work with

\begin{theorem}\label{Thm2.5.1}
Let $f:T \to S$ be a quasi-separated morphism, and let $T_1 \subset T \times_S T$ be the schematic closure of the diagonal morphism $\Delta_f : T \to T \times_S T$. Then, $f$ admits a separator $h$ if, and only if, every irreducible component of $T$ is locally finite ,i.e., every point has an open neighborhood which is disjoint to all but finitely many irreducible components of $T$, over $S$ and both the composition morphisms induced by the projections

\begin{align*}
T_1 \rightarrow & T \times_S T \rightrightarrows T
\end{align*}

are flat and of finite type.

\end{theorem}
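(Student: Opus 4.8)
The plan is to identify the separator, when it exists, with the quotient of $T$ by the equivalence relation $T_1 \subset T \times_S T$, and then to translate the defining properties of a separator into the stated conditions on the projections $T_1 \rightrightarrows T$. The central dictionary is the following: if $h \colon T \to E$ is a separator, then because $E$ is separated over $S$ the diagonal $\Delta_{E/S}$ is a closed immersion, so $T \times_E T$ sits inside $T \times_S T$ as the closed subscheme $(h \times h)^{-1}(\Delta_{E/S})$, and $\Delta_f$ factors as $T \xrightarrow{\Delta_h} T \times_E T \hookrightarrow T \times_S T$. Since $\Delta_h$ is schematic dominant, the schematic closure of $\Delta_f$ coincides with $T \times_E T$; that is, $T_1 = T \times_E T$. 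I would establish this identity first, as every subsequent step rests on it.

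For the forward direction I would read the conditions off from $T_1 = T \times_E T$. The projections $q_1, q_2 \colon T_1 \to T$ are precisely the two base changes of $h$ along itself; since $h$ is a quasicompact local isomorphism it is flat and of finite presentation, hence of finite type, by \cite[6.2.1]{EGA I}, and both properties are stable under base change, so $q_1$ and $q_2$ are flat and of finite type. For the local finiteness of the irreducible components I would argue that, $h$ being an open surjective local isomorphism, the component structure of $T$ is locally identified via $h$ with that of the separated scheme $E$, and the quasicompactness of $h$ controls how many sheets lie over a quasicompact neighborhood of a point of $E$, which forces each point of $T$ to have a neighborhood meeting only finitely many components.

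For the reverse direction I would construct $E$ directly as the quotient $T/T_1$. First I would verify that $T_1$ is an equivalence relation over $S$: reflexivity because $\Delta_f$ factors through its own schematic closure $T_1$; symmetry because the swap involution of $T \times_S T$ fixes $\Delta_f$ and hence preserves $T_1$; and transitivity, which is where flatness of the projections enters, since flatness is what makes the composition $T_1 \times_T T_1 \to T \times_S T$ (formed using $q_2$ and $q_1$) well behaved enough to factor back through $T_1$. With $T_1$ a flat, finite-type equivalence relation, I would represent the quotient by a separated $S$-scheme $E$ with $h \colon T \to E$ flat of finite presentation, appealing to the effectivity of the relation and using the local finiteness hypothesis to guarantee that $h$ is quasicompact and quasiseparated. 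Separatedness of $E$ over $S$ then follows because $T_1 = T \times_E T$ is closed in $T \times_S T$, so $\Delta_{E/S}$ is a closed immersion; that $h$ is a local isomorphism follows from flatness, finite presentation, and the fact that $T_1$ agrees with $T$ along the diagonal, via \cite[6.6.4]{EGA I}; and $\Delta_h$ is schematic dominant because $T \to T \times_E T = T_1$ is, by construction, the schematic-dominant map onto the schematic closure of the diagonal.

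The main obstacle, I expect, is the reverse direction, and within it the transitivity of $T_1$ together with the representability of the quotient. Schematic closures do not commute with fibre products in general, so proving that the composite of $T_1$ with itself lands back in $T_1$ is delicate, and controlling this is exactly the role played by the flatness of $q_1, q_2$, through the compatibility of scheme-theoretic image with flat base change. Likewise, passing from the quotient functor to an actual separated $S$-scheme requires both the finite-type hypothesis and, for the quasicompactness and quasiseparatedness of $h$, the local finiteness of the irreducible components; keeping careful track of which hypothesis produces which property in Definition \ref{Def2.1} is the bookkeeping heart of the argument.
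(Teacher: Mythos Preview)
The paper does not prove this theorem: its entire ``proof'' is the single sentence referring the reader to \cite[Theorem 5.1.1]{Separator}. There is therefore nothing in the paper to compare your proposal against; the result is imported wholesale from Ferrand and Kahn.

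That said, your outline is a reasonable high-level sketch of the argument one finds in \cite{Separator}. The identification $T_1 = T \times_E T$ via the factorization $T \xrightarrow{\Delta_h} T \times_E T \hookrightarrow T \times_S T$ and schematic dominance of $\Delta_h$ is exactly how the forward direction goes, and reading off flatness and finite type of the projections as base changes of $h$ is correct. You are also right that the substance lies in the reverse direction, specifically in (a) proving transitivity of $T_1$ using flat base change to control schematic closures, and (b) representing the quotient $T/T_1$ by an actual $S$-scheme. Point (b) is the real content of Ferrand and Kahn's work and is not something one can dispatch in a paragraph: the construction of $E$ proceeds by gluing, and the local finiteness hypothesis on irreducible components is what makes the gluing data manageable. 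Your proposal gestures at this but does not supply the mechanism; that is acceptable for an outline, but be aware that filling it in is essentially reproducing the cited paper.
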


The proof is in \cite[Theorem 5.1.1 ]{Separator}

\begin{cor} \label{Cor2.5.3} Let $T$ be a quasiseparated $S$-scheme where each irreducible component is locally finite. Then $T$ admits a separator $h: T \to E$ if, and only if, for all affine opens $U,V$ of $T$, the scheme $U \cup V$ admits a separator. 
\end{cor}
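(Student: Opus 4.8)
The plan is to derive both implications from the criterion of Theorem~\ref{Thm2.5.1}. Since local finiteness of the irreducible components is part of the hypothesis of the corollary (and is inherited by open subschemes), the only condition that must be transported between $T$ and its pairwise unions of affine opens is the flatness and finite type of the two projections $T_1 \rightrightarrows T$, where $T_1 \subset T\times_S T$ is the schematic closure of the diagonal. The whole argument rests on the fact that this is a condition local on $T\times_S T$, together with the observation that $T\times_S T$ is covered by the products $U_i\times_S U_j$ of members of any affine open cover $\{U_i\}$ of $T$, each of which is contained in $(U_i\cup U_j)\times_S(U_i\cup U_j)$.

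The key lemma is the compatibility of the schematic closure of the diagonal with restriction to open subschemes: for any open $W\subseteq T$ I claim
\[
T_1^{W} \;=\; T_1 \cap (W\times_S W),
\]
where $T_1^{W}$ denotes the schematic closure of $\Delta_{W/S}$ inside $W\times_S W$, and under this identification the two projections $T_1^{W}\rightrightarrows W$ are the restrictions of $T_1\rightrightarrows T$. This follows because an open immersion is flat, so scheme-theoretic closure commutes with restriction to the open $W\times_S W$, and because $\Delta_{T/S}(T)\cap(W\times_S W)=\Delta_{W/S}(W)$.

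Granting the lemma, the forward direction is immediate. If $T$ admits a separator, Theorem~\ref{Thm2.5.1} makes $T_1\rightrightarrows T$ flat and of finite type; restricting to $W=U\cup V$ via the lemma keeps these properties, and the irreducible components of the open subscheme $U\cup V$ remain locally finite, so Theorem~\ref{Thm2.5.1} returns a separator of $U\cup V$. For the converse, fix an affine open cover $\{U_i\}$ of $T$ and check flatness and finite type of $T_1\rightrightarrows T$ locally on the base: for the first projection $p$, over $U_i$ one has $p^{-1}(U_i)=T_1\cap(U_i\times_S T)$, which the products $U_i\times_S U_j$ cover. On each such piece the lemma identifies $T_1\cap(U_i\times_S U_j)$ with an open of $T_1^{U_i\cup U_j}$, whose projections are flat and of finite type because $U_i\cup U_j$ admits a separator; local finiteness of components being assumed for $T$, Theorem~\ref{Thm2.5.1} then yields a separator $h\colon T\to E$.

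The step I expect to be delicate is not flatness --- which is visibly local on source and target --- but the finite type condition in the converse, since it carries a quasicompactness requirement for $p^{-1}(U_i)=T_1\cap(U_i\times_S T)\to U_i$: one must ensure that assembling the finite-type affine pieces $T_1\cap(U_i\times_S U_j)$ over all $j$ does not break quasicompactness. Controlling this cover is exactly where the quasiseparatedness of $T$ over $S$ and the local finiteness of the irreducible components have to be used. Establishing the localization lemma itself (commutation of scheme-theoretic closure with open restriction, and the identification of the diagonal pieces) is routine, but is the other place where care is warranted.
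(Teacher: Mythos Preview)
Your proposal is correct, and for the converse direction it matches the paper's argument essentially line for line: cover $T\times_S T$ by the products $U\times_S V$ of affine opens, identify $T_1\cap(U\times_S V)$ with an open piece of the schematic diagonal closure for $U\cup V$ via your localization lemma, and read off flatness and finite type from the separator of $U\cup V$ through Theorem~\ref{Thm2.5.1}. The paper also makes explicit the point you flag as delicate, namely that the affine open immersions $U\hookrightarrow T$ are quasicompact (hence of finite type) because $T$ is quasiseparated, which is what keeps the finite-type condition from falling apart when the local pieces are reassembled.

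The forward direction is where you and the paper diverge. You run Theorem~\ref{Thm2.5.1} in both directions: a separator for $T$ gives flat, finite-type projections $T_1\rightrightarrows T$, restriction along $W=U\cup V$ preserves these, and Theorem~\ref{Thm2.5.1} then produces a separator for $U\cup V$. The paper instead argues directly from the definition: it shows that $U\cup V$ is retrocompact in $T$ (again by quasiseparatedness), and then checks that the restriction $U\cup V\to h(U\cup V)$ of the given separator $h$ is itself a separator, by verifying surjectivity, quasicompactness, and schematic dominance of its diagonal. Your route is cleaner and more uniform, since both implications flow through a single criterion; the paper's route is more hands-on but yields the extra information that the separator of $U\cup V$ is literally the restriction of $h$, which is useful elsewhere (cf.\ Proposition~\ref{hopenimm}).
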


\begin{proof}
It suffices to show that $T$ has a cover by affine opens $U_{\lambda}$ such that the union of every two opens in the cover $U_{\lambda} \cup U_{\mu}$ admits a separator.

First, let $U,V \subset T$ be affine opens. Since $T$ is quasi-separated, the intersection of any affine open with $U \cup V$ is quasi-compact. Recall that a subset $Z$ of a topological space $X$ is said to be \textit{retrocompact} if $Z \cap U$ is quasi-compact for every quasi-compact open subset $U$ of $X$. So $U \cup V$ is retrocompact in $T$.  

Hence, it suffices to show that for all retrocompact open $U \subset T$, $h(U)$ is open and the morphism $U \to h(U)$ is a separator of $U$. That $h(U)$ is open in $E$ follows from the fact that $U$, by hypothesis, is retrocompact. It remains to show that $h(U)$ is separated over $S$. Since $h$ is a local isomorphism, we have the induced isomorphism $h': U \to h(U)$, which induces the commutative diagram 

$$
\begin{tikzcd}[column sep=5pc]
  U \arrow{d}[swap]{i} \arrow{r}{\Delta_{h'}} & U \times_{h(U)} U \arrow{r}{u} & U \times_E U \arrow{d}{i \times i} \\
  T  \arrow{r}[swap]{\Delta_h} & T \times_E T \arrow[r, equal, shift left]  & T \times_E T
\end{tikzcd}
$$
where $u$ is an isomorphism, since $h(U) \to E$ is an immersion. Since $ i \times i$ is an open immersion and $\Delta_h$ is quasi-compact and schematic dominant, $\Delta_{h'}$ is also schematic dominant. Finally, for $h'$ be a separator, it remains to show that it is quasi-compact. But $h'$ can be expressed as the composition of two quasi-compact morphisms, i.e.,

\begin{align*}
U \xrightarrow{i'} h^{-1}(h(U)) \xrightarrow{h} h(U)
\end{align*}
where $i'$ is the open immersion induced by the inclusion $i$. Therefore the condition is necessary.

Next, notice that $(U \cup V) \times (U \cup V) \subset T \times T$ is the union of four canonical opens, namely, $U \times U, V \times V, U \times V, V \times U$. Let $T_1$ be the schematic closure of the diagonal in $T \times T$. Then both $U \times U$ and $V \times V$ are isomorphic via the projection to $U$ and $V$ respectively in $T$, hence flat and of finite type. It suffices to work with $U \times V$. Let $W = T_1 \cap (U \times V)$. By Theorem \ref{Thm2.5.1}, both projections $d_1 : W \to U$ and $d_0 : W \to V$ are flat and of finite type. Since T is quasi-separated, the open immersions $U \to T$ and $V \to T$ are (flat and) of finite type. The open sets $U \times V$, with $U$ and $V$ affines, cover $T \times T$, so the two projections of $T_1$ to $T$ are flat and of finite type and, again, from Theorem \ref{Thm2.5.1}, the corollary follows.
\end{proof}

In \cite{Separator}, Ferrand and Kahn show some schemes that admit a separator and several others that do not. We end this section with some examples:

\begin{itemize}
\item[i)] Every regular locally neotherian scheme of dimension 1 admits a separator, for instance if T is a Neotherian Dedekind scheme over $Spec(\Z)$.
\item[ii)] If $f: T \to S$ is étale of finite presentation and $S$ is normal, then $f$ admits a separator.
\item[iii)] Any normal scheme of finite type over a Noetherian ring admits an open subscheme containing all points of codimension 1and this subscheme has a separator.
\end{itemize}

\subsection{Strong Generators of $\Dqc(\Xs)$}\label{SubSecAmnons}
 
\subsubsection{Strong Generators of a Triangulated Category}

We begin with some definitions, terminology and key properties of a strongly generated category. Most of what is written here follows the first few chapters of \cite{Amnon}.

\begin{defi} Let $\cT$ be a triangulated category and $G \in \cT$ an object. The full subcategory $\Gn_n \subset \cT$ is defined inductively as follows:
\begin{itemize}
\item[i)] $\Gn_1$ is the full subcategory consisting of all direct summands of finite coproducts of suspensions of $G$.
\item[ii)] For $n>1$, $\Gn_n$ is the full subcategory consisting of all objects that are direct summand of an object $y$, where $y$ fits into a triangle $x \to y \to z$, with $x \in \Gn_1$ and $z \in \Gn_{n-1}$.
\end{itemize}
\end{defi}

\begin{defi} Let $G$ be an object in a triangulated category $\cT$. Then $G$ is said to be a \textit{classical generator} if $\cT = \cup_{n=1}^{\infty} \Gn_n$ and a \textit{strong generator} if there exists an $n \in \Z_{\geq 1}$ with $\cT = \Gn_n$. 
\end{defi}

\begin{defi} A triangulated category $\cT$ is called \textit{regular} or \textit{strongly generated} if a strong generator exists.
\end{defi}

\begin{remark} \
\begin{itemize}
\item One might also say that a regular category $\cT$ is built from $G$ in finitely many steps.
\item In \cite{Amnon}, a general discussion about different properties of triangulated category, such as being $proper$ or $idempotent complete$ follows. It also gives insight about the importance of studying such objects.
\end{itemize} 
\end{remark}

\begin{defi} \label{deficoprod} Let $\cT$ be a triangulated category with coproducts, $G \in \cT$ an object and $ A<B$ integers. Then $\overline{\Gn}_{n}^{[A,B]} \subset \cT$ is the full subcategory defined inductively as follows:
\begin{itemize}
\item[i)] $\overline{\Gn}_{1}^{[A,B]}$ is the full subcategory consisting of all direct summands of abritary coproducts of objects in the set $\{ \Sigma^{-i} G, A \leq i \leq B \}$ .
\item[ii)] $\overline{\Gn}_{n}^{[A,B]}$ is the full subcategory consisting of all objects that are direct summand of an object $y$, where $y$ fits into a triangle $x \to y \to z$, with $x \in \overline{\Gn}_{1}^{[A,B]}$ and $z \in \overline{\Gn}_{n-1}^{[A,B]}$.
\end{itemize}
\end{defi}

The difference between the categories $\Gn_n$ and $\overline{\Gn}_{n}^{[A,B]}$ is that $\overline{\Gn}_{n}^{[A,B]}$ allows arbitrary coproducts, but restrict the allowed suspensions to a fixed range from $A$ to $B$.

\subsubsection{Operations between subcategories}

There are several ways to create a new subcategory from others. Some of them will be defined in this section, which follows \cite{Amnon}.

\begin{defi} Let $\cT$ be a triangulated category with $\cA$ and $\cB$ two subcategories of $\cT$. Then:
\begin{itemize}
\item[i)] $\cA \star \cB$ is the full subcategory of all objects $y$ for which there exist a triangle $x \to y \to z$ with $x \in \cA$ and $z \in \cB$.
\item[ii)] $add(\cA)$ is the full subcategory containing all finite coproducts of objects in $\cA$.
\item[iii)] If $\cT$ is closed under coproducts, then $Add(\cA)$ is the full subcategory containing all (set-indexed) coproducts of objects in $\cA$
\item[iv)] If $\cA$ is also a full subcategory, then $smd(\cA)$ is the full subcategory of all direct summands of objects in $\cA$.
\end{itemize}
\end{defi}

Note that the empty coproduct is $0$, hence $0 \in add(\cA) \subset Add(\cA)$ for any $\cA$.

\begin{defi} Let $\cT$ be a triangulated category and $\cA$ a subcategory. Define:
\begin{align*} 
\text{i)} \ coprod_1(\cA) &:= add(\cA);  & coprod_{n+1}(\cA) &:= coprod_1(\cA) \star coprod_n(\cA). \\
\text{ii)} \  Coprod_1(\cA) &:= Add(\cA); & Coprod_{n+1}(\cA) &:= Coprod_1(\cA) \star Coprod_n(\cA).\\
\text{iii)} \  coprod(\cA) &:= \cup^{\infty}_{n=1} coprod_n(\cA).\\
\end{align*}
$\text{iv) } Coprod(\cA) \text{ is the smallest strictly full subcategory of } \cT$ [assumed to have coproducts] containing $\cA$ and satisfying 
\begin{align*}
Add(Coprod(\cA)) \subset Coprod(\cA)  & \ & \text{and}  & \ & Coprod(\cA) \star \Coprod(\cA) \subset Coprod(\cA).
\end{align*}

%\begin{itemize}
%\item[i)]$coprod_1(\cA) := add(\cA), \ \text{    } \ coprod_{n+1}(\cA) := coprod_1(\cA) \star coprod_n(\cA)$.
%\item[ii)]$Coprod_1(\cA) := Add(\cA), \ \text{    } \ Coprod_{n+1}(\cA) := Coprod_1(\cA) \star Coprod_n(\cA)$.
%\item[iii)] $coprod(\cA) := \cup^{\infty}_{n=1} coprod_n(\cA)$.
%\item[iv)] $Coprod(\cA)$ is the smallest strictly full subcategory of $\cT$ containing $\cA$ and satisfying 
%\begin{align*}
%Add(Coprod(\cA)) \subset Coprod(\cA)  & \ & \text{and}  & \ & Coprod(\cA) \star \Coprod(\cA) \subset Coprod(\cA).
%\end{align*}
%\end{itemize}

\end{defi}

\begin{remark} The diagram

%\begin{itemize}
%\item[i)]
$$
\begin{tikzcd}%
        coprod_n(\cA) \arrow[hookrightarrow]{r} \arrow[hookrightarrow]{d} & coprod(\cA)  \arrow[hookrightarrow]{d} 
        \\
        Coprod_n(\cA) \arrow[hookrightarrow]{r} & Coprod(\cA)
    \end{tikzcd}%
$$
commutes. Moreover, the associativity of the $\star$ operation gives that
\begin{align*}
coprod_m(\cA) \star coprod_n(\cA)  = & \  coprod_{m+n}(\cA), \\
Coprod_m(\cA) \star Coprod_n(\cA)  = & \  Coprod_{m+n}(\cA). 
\end{align*}

It can also be shown that $Coprod_1 (Coprod_n (\cA)) = Add (Coprod_n (\cA)) = Coprod_n(\cA)$. Hence $Coprod_n (Coprod_m (\cA)) \subset Coprod_{nm}(\cA)$.

%\end{itemize}
\end{remark}

The following lemma may be found in \cite[Lemma 1.7]{Amnon} and will be used once to prove the next corollary. 

\begin{lemma} Let $\cT$ be a triangulated category with coproducts, $\cT^c$ be the subcategory of compact objects in $\cT$, and let $\cB$ be a subcategory of $\cT^c$. Then
\begin{itemize}
\item[(i)] For $x \in Coprod_n(\cB)$ and $s \in \cT^c$, any map $s \to x$ factors as $s \to b \to x$ with $b \in coprod_n(\cB)$.
\item[(ii)] For $x \in Coprod(\cB)$ and $s \in \cT^c$, any map $s \to x$ factors as $s \to b \to x$ with $b \in coprod(\cB)$.
\end{itemize}
\end{lemma}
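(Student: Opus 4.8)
The plan is to prove (i) by induction on $n$ and then to deduce (ii) by a minimality argument. Throughout, the only property of $s$ that I use is compactness, i.e. that $\mathrm{Hom}(s,-)$ commutes with coproducts, so that any map from $s$ into a set-indexed coproduct factors through a finite subcoproduct; I also use repeatedly that $coprod_n(\cB)\subseteq\cT^c$ (finite coproducts and cones of maps between compact objects are compact) and that each $coprod_n(\cB)$ is closed under finite coproducts and satisfies $coprod_m(\cB)\star coprod_n(\cB)=coprod_{m+n}(\cB)$.

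For (i), the base case $n=1$ is immediate: if $x\in Coprod_1(\cB)=Add(\cB)$ is a coproduct $\bigoplus_\alpha b_\alpha$ of objects of $\cB$, then $s\to x$ factors through a finite subcoproduct, which lies in $add(\cB)=coprod_1(\cB)$. For the inductive step, present $x\in Coprod_{n+1}(\cB)$ by a triangle $x_1\to x\xrightarrow{g} x_n$ with $x_1\in Coprod_1(\cB)$ and $x_n\in Coprod_n(\cB)$, and given $f\colon s\to x$ first factor $gf$ as $\beta\phi$ with $\phi\colon s\to b_n$ and $b_n\in coprod_n(\cB)$ compact, by the inductive hypothesis. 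Writing $w\colon x_n\to\Sigma x_1$ for the connecting map, the composite $w\beta\colon b_n\to\Sigma x_1$ factors, by compactness of $b_n$, through a finite subcoproduct $\Sigma x_1^F$ with $x_1^F\in coprod_1(\cB)$; because $\Sigma x_1^F\to\Sigma x_1$ is a split monomorphism and $w\beta\phi=wgf=0$, the obstruction $\mu\phi$ (with $\mu\colon b_n\to\Sigma x_1^F$ the factoring map) vanishes. Building a triangle $x_1^F\to b\to b_n$ from $\mu$, so that $b\in coprod_1(\cB)\star coprod_n(\cB)=coprod_{n+1}(\cB)$, axiom (TR3) yields $\theta\colon b\to x$ compatible with $g$; a lift $\psi\colon s\to b$ of $\phi$ then satisfies $g(f-\theta\psi)=0$, so $f-\theta\psi$ factors as $s\to x_1\to x$, and factoring this $s\to x_1$ through a finite subcoproduct $x_1^{F'}\in coprod_1(\cB)$ and passing to $b\oplus x_1^{F'}\in coprod_{n+1}(\cB)$ gives the factorization.

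For (ii), let $\cU$ be the full subcategory of all $x\in\cT$ such that every map $s\to x$ from a compact $s$ factors as $s\to b\to x$ with $b\in coprod(\cB)$. It suffices to show that $\cU$ contains $\cB$ and is closed under $Add$ and under $\star$, for then the minimality of $Coprod(\cB)$ gives $Coprod(\cB)\subseteq\cU$, which is precisely the claim. That $\cB\subseteq\cU$ is clear, and closure under $Add$ is again pure compactness: a map into $\bigoplus_i x_i$ factors through a finite subcoproduct, whose finitely many components are factored separately through objects of $coprod(\cB)$ and recombined, using that $coprod(\cB)=\bigcup_n coprod_n(\cB)$ is closed under finite coproducts.

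The main obstacle is closure of $\cU$ under $\star$. Given a triangle $x'\xrightarrow{u} x\xrightarrow{g} x''\xrightarrow{w}\Sigma x'$ with $x',x''\in\cU$ and $f\colon s\to x$, I first factor $gf$ as $\beta\phi$ with $\phi\colon s\to b''$, $b''\in coprod(\cB)$ compact. I must then build a triangle $c'\to b\to b''$ with $b\in coprod(\cB)$ and a map $\theta\colon b\to x$ through which $f$ factors, and the delicate point is to choose $\mu\colon b''\to\Sigma c'$ so that simultaneously $\Sigma\kappa\circ\mu=w\beta$ for some $\kappa\colon c'\to x'$ (so that (TR3) produces $\theta$) and $\mu\phi=0$ (so that $\phi$ lifts along $b\to b''$). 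The key trick is to route the connecting data through the cone of $\phi$: form a triangle $s\xrightarrow{\phi}b''\xrightarrow{t}C\to\Sigma s$ with $C$ compact; since $w\beta\phi=wgf=0$, the map $w\beta$ factors as $\bar\partial\,t$, and then, crucially using $x'\in\cU$ together with the compactness of $\Sigma^{-1}C$, the map $\Sigma^{-1}\bar\partial\colon\Sigma^{-1}C\to x'$ factors through some $c'\in coprod(\cB)$, say $\Sigma^{-1}\bar\partial=\bar\kappa\,\bar\nu$. Taking $\mu=(\Sigma\bar\nu)\,t$ and $\kappa=\bar\kappa$ yields both $\Sigma\kappa\circ\mu=w\beta$ and $\mu\phi=(\Sigma\bar\nu)(t\phi)=0$, the latter because $t\phi=0$. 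This desuspension onto the source is exactly what lets me invoke the $\cU$-property of $x'$ without assuming $\cB$ or $coprod(\cB)$ closed under suspension. Finally, with a lift $\zeta\colon s\to b$ of $\phi$, the difference $f-\theta\zeta$ satisfies $g(f-\theta\zeta)=0$ and so factors as $u\delta$ with $\delta\colon s\to x'$; factoring $\delta$ through $coprod(\cB)$ (once more by $x'\in\cU$) and forming the direct sum with $b$ exhibits $f$ as factoring through $coprod(\cB)$, which completes the proof.
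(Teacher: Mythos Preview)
Your argument is correct. The paper does not actually prove this lemma but simply cites \cite[Lemma~1.7]{Amnon}, so there is no in-house proof to compare against; your induction for (i) and minimality argument for (ii) constitute a complete self-contained proof along the lines of Neeman's original.

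One remark worth recording, since it is the genuinely non-obvious step: in the $\star$-closure argument for (ii) you correctly identified that the naive approach---factoring $w\beta$ directly through some $\Sigma c'$ with $c'\in coprod(\cB)$---fails because the resulting map $\kappa$ need not be a split monomorphism, so one cannot deduce $\mu\phi=0$ from $(\Sigma\kappa)\mu\phi=0$. Your detour through the cone $C$ of $\phi$ is exactly the right fix: it forces $\mu\phi=(\Sigma\bar\nu)(t\phi)=0$ structurally, and the desuspension to $\Sigma^{-1}C\to x'$ lets you invoke the $\cU$-property of $x'$ rather than of $\Sigma x'$, avoiding any suspension-closure hypothesis on $\cB$. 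In part (i) this subtlety does not arise because there $x_1\in Add(\cB)$ is an honest coproduct, so the finite-subcoproduct inclusion is a split monomorphism and the simpler argument goes through.
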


\begin{proof}
\cite[Lemma 1.7]{Amnon}
\end{proof}

\begin{cor}
\label{lemma1.8} Let $\cT$ be a triangulated category with coproducts, $\cT^c$ be the subcategory of compact objects in $\cT$, and let $\cB$ be a subcategory of $\cT^c$. Then
\begin{itemize}
\item[(i)] Any compact object in $Coprod_n(\cB)$ belongs to $smd(coprodn(\cB))$.
\item[(ii)] Any compact object in $Coprod(\cB)$ belongs to $smd(coprod(\cB))$.
\end{itemize}
\end{cor}

\begin{proof}
Let $x$ be a compact object in $Coprod_n(\cB)$. The identity map $1 \colon x \to x$ is a morphism from the compact object $x$ to $x \in Coprod_n(\cB)$. By the previous Lemma \ref{Lemma 1.7}, the morphism factors through an object $b \coprod_n(\cB)$. Thus $x$ is a direct summand of $b$ and the results follows.

The same proof holds be removing the subscript $n$, which proves item (ii).
\end{proof}

The next three results follow from these definitions with proofs found in the background section from \cite{Amnon}.

\begin{lemma} \label{Amnlemma1.8} Let $\cT$ be a triangulated category with coproducts, and let $\cB$ be an arbitrary subcategory. Then

\begin{align*}
Coprod_n(\cB) \subset smd(Coprod_n(\cB)) \subset Coprod_{2n} (\cB \cup \Sigma \cB).
\end{align*}
\end{lemma}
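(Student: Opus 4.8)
The plan is to dispatch the first inclusion trivially and to concentrate on the second. For the first, every object $y$ is a direct summand of itself (write $y \cong y \oplus 0$), so $Coprod_n(\cB) \subset smd(Coprod_n(\cB))$ with no work. For the second inclusion, fix $x \in smd(Coprod_n(\cB))$, so that $x$ is a direct summand of some $y \in Coprod_n(\cB)$; write $i \colon x \to y$ and $p \colon y \to x$ for the structure maps with $p i = 1_x$, and set $e = i p \colon y \to y$, an idempotent. The idea is to realize $x$ as the homotopy colimit of the telescope $y \xrightarrow{e} y \xrightarrow{e} y \xrightarrow{e} \cdots$, and then read off a two-step filtration of $x$ from the defining triangle of that homotopy colimit.

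Concretely, since $\cT$ has (in particular countable) coproducts, the homotopy colimit $x'$ of the telescope is defined by the triangle $\coprod_{\N} y \xrightarrow{1 - \mathrm{sh}} \coprod_{\N} y \to x' \to \Sigma\coprod_{\N} y$, where $\mathrm{sh}$ is the shift map built from $e$. The key input is the standard fact that the homotopy colimit of the telescope of a \emph{split} idempotent is its image; since $e$ splits with image $x$, this gives $x' \cong x$. Hence $x$ sits in a distinguished triangle $A \to x \to \Sigma A$ with $A = \coprod_{\N} y$.

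It remains to locate the two ends of this triangle. First, $Coprod_n(\cB)$ is closed under arbitrary coproducts: this holds for $Coprod_1(\cB) = Add(\cB)$ by construction, and propagates through the relation $Coprod_{n} = Coprod_1 \star Coprod_{n-1}$ because a coproduct of distinguished triangles is a distinguished triangle. Thus $A = \coprod_{\N} y \in Coprod_n(\cB)$. Second, $\Sigma$ is an exact autoequivalence commuting with coproducts, so $\Sigma Coprod_n(\cB) = Coprod_n(\Sigma\cB)$, giving $\Sigma A \in Coprod_n(\Sigma\cB)$. Using monotonicity of $Coprod_n$ in its argument together with $\cB, \Sigma\cB \subset \cB \cup \Sigma\cB$, both ends lie in $Coprod_n(\cB \cup \Sigma\cB)$, and therefore $x \in Coprod_n(\cB \cup \Sigma\cB) \star Coprod_n(\cB \cup \Sigma\cB)$. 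Finally the composition law $Coprod_m(\cA) \star Coprod_n(\cA) = Coprod_{m+n}(\cA)$ recorded in the Remark, applied with $m = n$ and $\cA = \cB \cup \Sigma\cB$, yields $x \in Coprod_{2n}(\cB \cup \Sigma\cB)$, as desired.

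The main obstacle is the single nontrivial input above: that the telescope of a split idempotent has homotopy colimit equal to its image. Everything else is formal --- the triviality of the first inclusion, closure of $Coprod_n(\cB)$ under coproducts, the commutation $\Sigma Coprod_n(\cB) = Coprod_n(\Sigma\cB)$, and the $\star$-composition identity from the Remark. Note that the factor of $2$ is forced by the two-term triangle produced by the homotopy colimit, and the appearance of $\Sigma\cB$ is precisely the suspension sitting on the right-hand end of that triangle.
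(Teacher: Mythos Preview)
Your proof is correct. The paper does not supply its own argument for this lemma but defers to \cite{Amnon}; the telescope construction you use---realizing the summand $x$ as the homotopy colimit of the idempotent $e=ip$ on $y$ and then reading off the two-step filtration from the defining triangle $\coprod_{\N} y \to \coprod_{\N} y \to x \to \Sigma\coprod_{\N} y$---is exactly the standard device employed there, so your approach matches the intended one.
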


\begin{remark} \label{rmkcoprod} Let $\cT$ be a triangulated category with coproducts, and let $\cB \subset \cT$ be a subcategory. For any pair of integers $m \leq n$ define 
\begin{align*}
\cB[m,n] = \bigcup^{-m}_{i=-n} \Sigma^i\cB.
\end{align*}
\end{remark}

\begin{cor} \label{corcoprod} For integers $N >0, A\leq B$ the identity $\overline{\Gn}_N^{[A,B]} = smd(Coprod_N(G[A,B]))$ always holds. Furthermore, one has the inclusions:

\begin{align*}
Coprod_N(G[A,B]) \subset \  \overline{\Gn}_N^{[A,B]}  \subset  Coprod_{2N} (G[A-1,B]).
\end{align*}

\end{cor}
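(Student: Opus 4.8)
The plan is to prove the equality $\overline{\Gn}_N^{[A,B]} = smd(Coprod_N(G[A,B]))$ by induction on $N$ and then to read off both inclusions from it. For the base case $N=1$, Definition \ref{deficoprod} says that $\overline{\Gn}_1^{[A,B]}$ consists of the direct summands of arbitrary coproducts of the objects $\{\Sigma^{-i}G : A \le i \le B\}$. Under the convention of Remark \ref{rmkcoprod} this set is exactly $G[A,B]$, and since $Coprod_1(G[A,B]) = Add(G[A,B])$ is precisely the class of such coproducts, we get $\overline{\Gn}_1^{[A,B]} = smd(Add(G[A,B])) = smd(Coprod_1(G[A,B]))$ directly from the definitions.

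For the inductive step I would first record the elementary containment $smd(\cA) \star smd(\cB) \subset smd(\cA \star \cB)$, valid for any subcategories. This follows by taking $y$ with a triangle $x \to y \to z$, writing $a = x \oplus x' \in \cA$ and $b = z \oplus z' \in \cB$, and forming the direct sum of that triangle with the split triangles $x' \to x' \to 0$ and $0 \to z' \to z'$: this yields a triangle $a \to y \oplus x' \oplus z' \to b$, so $y$ is a summand of an object of $\cA \star \cB$. Granting this, together with idempotence of $smd$ and the definition $\overline{\Gn}_N^{[A,B]} = smd(\overline{\Gn}_1^{[A,B]} \star \overline{\Gn}_{N-1}^{[A,B]})$, the base case and inductive hypothesis give
\[
\overline{\Gn}_N^{[A,B]} = smd\bigl(smd(Coprod_1(G[A,B])) \star smd(Coprod_{N-1}(G[A,B]))\bigr) \subset smd(Coprod_N(G[A,B])),
\]
using $Coprod_1 \star Coprod_{N-1} = Coprod_N$ at the end. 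The reverse inclusion is softer: from $Coprod_1(G[A,B]) \subset \overline{\Gn}_1^{[A,B]}$ and $Coprod_{N-1}(G[A,B]) \subset \overline{\Gn}_{N-1}^{[A,B]}$ one gets $Coprod_N(G[A,B]) \subset \overline{\Gn}_1^{[A,B]} \star \overline{\Gn}_{N-1}^{[A,B]} \subset \overline{\Gn}_N^{[A,B]}$, and taking summands — under which $\overline{\Gn}_N^{[A,B]}$ is stable by definition — yields $smd(Coprod_N(G[A,B])) \subset \overline{\Gn}_N^{[A,B]}$. This proves the equality.

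Given the equality, the first inclusion in the displayed statement is immediate, since $\cA \subset smd(\cA)$ always. For the second I would apply Lemma \ref{Amnlemma1.8} with $\cB = G[A,B]$ and $n = N$, obtaining
\[
\overline{\Gn}_N^{[A,B]} = smd(Coprod_N(G[A,B])) \subset Coprod_{2N}\bigl(G[A,B] \cup \Sigma G[A,B]\bigr),
\]
and then identify the generating set. By Remark \ref{rmkcoprod}, $G[A,B] = \bigcup_{i=-B}^{-A}\Sigma^i G$, so $\Sigma G[A,B] = \bigcup_{i=-B+1}^{-A+1}\Sigma^i G$; the union of the integer intervals $[-B,-A]$ and $[-B+1,-A+1]$ equals $[-B,-A+1]$, whence $G[A,B] \cup \Sigma G[A,B] = G[A-1,B]$ and the inclusion $\overline{\Gn}_N^{[A,B]} \subset Coprod_{2N}(G[A-1,B])$ follows.

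The load-bearing step is the containment $smd(\cA) \star smd(\cB) \subset smd(\cA \star \cB)$ that powers the induction; it is routine but indispensable. The only genuine bookkeeping is the index identity $G[A,B] \cup \Sigma G[A,B] = G[A-1,B]$, where one must keep in mind that $\Sigma$ shifts the interval by $+1$, so that absorbing $\Sigma G[A,B]$ enlarges the range on the $A$-end to $A-1$. Beyond keeping these suspension conventions straight, I do not expect any real obstacle.
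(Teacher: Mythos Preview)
Your proposal is correct. The paper does not spell out a proof of this corollary, deferring instead to the background section of \cite{Amnon}; your argument---proving the identity $\overline{\Gn}_N^{[A,B]}=smd(Coprod_N(G[A,B]))$ by induction on $N$ via the standard containment $smd(\cA)\star smd(\cB)\subset smd(\cA\star\cB)$, and then reading off the second inclusion from Lemma~\ref{Amnlemma1.8} together with the index identity $G[A,B]\cup\Sigma G[A,B]=G[A-1,B]$---is exactly the approach taken there, so your write-up simply supplies what the paper omits.
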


From these two results, one concludes that regarding finiteness conditions there is no loss in generality when working with $Coprod_n(G[A,B])$ instead of $\overline{\Gn}_N^{[A,B]}$, and $smd$ will not change the finiteness of the category generated by $G[A,B]$. So we may work with $Coprod_n(G[A,B])$, which behaves well with $smd$ and the $\star$ operations.

We end this section with a brief discussion about the $Coprod_n(G[A,B])$ subcategory. As stated in [\ref{deficoprod}], $Coprod_n(G[A,B])$ is a full subcategory. Together with Remark \ref{rmkcoprod}, we may let the suspensions free by considering $Coprod_n(G[-\infty, \infty])$. This means that any object $x \in Coprod_n(G[-\infty, \infty])$ factors through $Coprod_n(G[A, B])$ for integers $A$ and $B$. As usual, is it possible that $\cT = Coprod_n(G[-\infty, \infty])$, which motivates the following definition.

\begin{defi} Let $\cT$ be a triangulated category with coproducts and $G \in \cT$ an object in $\cT$. Then, $\cT$ is said to be \textit{fast generated by $G$} if $\cT = Coprod_n(G[-\infty, \infty])$.
\end{defi} 

When $G$ is a compact object, Corollary \ref{corcoprod} tells us that if $\cT = \overline{\Gn}_{n}^{[-\infty,\infty]}$, then $\cT$ is fast generated. In this paper, we will always consider the case when $G$ is a compact generator.

\subsection{The $\Dqc(\Xs)$ case}

Let $\Xs$ be a quasicompact separated scheme. One may consider the category $\Dqc(\Xs)$, which is the unbounded derived category of cochain complexes sheaves of $\cO_{\Xs}-$modules with quasicoherent cohomology, and let $\textbf{D}_{\textbf{perf}}(X)$ be the subcategory of compact objects.

Although the main result is about $\textbf{D}_{\textbf{perf}}(X)$, the next result is the reason why we may work over the bigger triangulated category $\Dqc(X)$, which contains coproducts for $X$ quasicompact, quasiseparated and moreover is compactly generated.

\begin{prop} \label{propreduc}
Let $X$ be a quasicompact, quasiseparated scheme and $G \in \textbf{D}_{\textbf{perf}}(X)$ be a compact generator of $\Dqc(X)$. If $\Dqc(X)$ is fast generated by $G$, then $G$ strongly generates $\textbf{D}_{\textbf{perf}}(X).$
\end{prop}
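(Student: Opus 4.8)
The plan is to push the fast generation of the ambient category $\Dqc(X)$ down to its compact objects, exploiting that a compact object built by arbitrary coproducts is already built by finite coproducts with the same number of cones. First I would fix an arbitrary $P \in \Dperf(X)$. Since $\Dperf(X)$ is by definition the subcategory $\Dqc(X)^c$ of compact objects, $P$ is compact; and since $\Dqc(X)$ is fast generated by $G$, there is a single integer $n$, independent of $P$, with $\Dqc(X) = Coprod_n(G[-\infty,\infty])$. In particular $P \in Coprod_n(G[-\infty,\infty])$.

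The crux is then Corollary \ref{lemma1.8}. As $G$ is compact, so is every suspension $\Sigma^i G$, whence $G[-\infty,\infty] \subseteq \Dqc(X)^c$ and the hypothesis of the corollary is met with $\cB = G[-\infty,\infty]$. Applying part (i), the compact object $P$, lying in $Coprod_n(G[-\infty,\infty])$, must lie in $smd(coprod_n(G[-\infty,\infty]))$. This is precisely the step that trades the arbitrary coproducts of $Coprod$ for the finite coproducts of $coprod$ while preserving the index $n$ --- the essential point, since strong generation concerns the finitely built subcategories $\Gn_n$.

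It remains to check the inclusion $smd(coprod_n(G[-\infty,\infty])) \subseteq \Gn_n$, which I would carry out by induction on $n$ directly from the definitions: $coprod_1(G[-\infty,\infty]) = add(G[-\infty,\infty]) \subseteq \Gn_1$, and the inductive step $coprod_n(G[-\infty,\infty]) = coprod_1(G[-\infty,\infty]) \star coprod_{n-1}(G[-\infty,\infty]) \subseteq \Gn_1 \star \Gn_{n-1} \subseteq \Gn_n$ follows from the defining triangle of $\Gn_n$; since $\Gn_n$ is closed under direct summands the outer $smd$ adds nothing. Thus $P \in \Gn_n$, giving $\Dperf(X) \subseteq \Gn_n$, while the reverse inclusion is automatic because $G \in \Dperf(X)$ and $\Dperf(X)$ is a thick triangulated subcategory. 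Hence $\Dperf(X) = \Gn_n$ and $G$ strongly generates $\Dperf(X)$ with $N = n$. The genuine content is concentrated entirely in Corollary \ref{lemma1.8} (Neeman's factorization lemma); the only point demanding vigilance is the uniformity of the index $n$, which survives each passage from $Coprod$ to $coprod$ and then to $\Gn$ because every one of those steps preserves the subscript exactly.
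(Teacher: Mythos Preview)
Your proof is correct and follows exactly the paper's approach: set $\cB = \{\Sigma^i G : i \in \Z\}$, apply Corollary \ref{lemma1.8}(i) to pass from $Coprod_n(\cB)$ to $smd(coprod_n(\cB))$, and identify the latter with $\Gn_n$. The paper's proof is a two-line version of yours, leaving the unpacking of $smd(coprod_n(\cB)) = \Gn_n$ implicit where you spell it out.
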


\begin{proof}
Consider $\cB = \{ \Sigma^i G, i \in \Z \}$. Then Corollary \ref{lemma1.8} gives that 
$\textbf{D}_{\textbf{perf}}(X) = smd(coprod_n(\cB))$, which implies that $G$ strongly generates $\textbf{D}_{\textbf{perf}}(X)$. 
\end{proof}
The path should be clear by now. With the conditions of Theorem \ref{main}, if we show that $\Dqc(X)$ is fast generated by a compact generator, then by the above Proposition \ref{propreduc} the main result will follow.

We finish this section with two more results from \cite{Amnon} stated without proof. 

\begin{theorem}[Neeman]
\label{Amn6.2}
Let $j: V \to \Xs$ be an open immersion of quasicompact, separated schemes, and let $G$ be a compact generator for $\Dqc(\Xs)$. If $H$ is any compact object of $\Dqc(V)$, and we are given integers $n,a \leq b$, then there exist integers $N, A \leq B$ so that $\Coprod _n (\textbf{R}j_*H[a,b]) \subset \Coprod _N(G[A,B])$.
\end{theorem}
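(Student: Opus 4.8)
The plan is to reduce the stated containment of subcategories to a single membership statement, and then to build that one object, $\mathbf{R}j_*H$, out of $G$ by a sequence of Čech/Mayer--Vietoris decompositions that bottom out, at the very end, in localizations handled by a homotopy-colimit (telescope) construction.

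First I would reduce to showing that $\mathbf{R}j_*H$ itself lies in $\Coprod_M(G[A_0,B_0])$ for suitable finite $M$, $A_0\le B_0$. Granting this, the full containment follows formally: shifting gives $\Sigma^{-i}\mathbf{R}j_*H \in \Coprod_M(G[A_0+i,B_0+i])$, so every object of $\mathbf{R}j_*H[a,b]$ lies in $\Coprod_M(G[A_0+a,B_0+b])$; since $\Coprod_M(\cdot)$ is closed under $Add$, this yields $\Coprod_1(\mathbf{R}j_*H[a,b]) \subset \Coprod_M(G[A_0+a,B_0+b])$, and then the associativity identity $\Coprod_p \star \Coprod_q = \Coprod_{p+q}$ gives $\Coprod_n(\mathbf{R}j_*H[a,b]) \subset \Coprod_{nM}(G[A_0+a,B_0+b])$, so $N=nM$, $A=A_0+a$, $B=B_0+b$ work.

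For the single object I would exploit separatedness twice. Choose a finite affine open cover $\{V_i\}$ of $V$; because $\Xs$ is separated, every intersection $V_\sigma$ is affine and each inclusion $j_\sigma\colon V_\sigma \hookrightarrow \Xs$ is an affine morphism, so $\mathbf{R}(j_\sigma)_* = (j_\sigma)_*$ is exact. Pushing the Čech resolution of $H$ for this cover forward by $\mathbf{R}j_*$ presents $\mathbf{R}j_*H$ as the totalization of a finite complex whose terms are the $(j_\sigma)_*(H|_{V_\sigma})$; this totalization is an iterated extension, costing only a bounded number of $\star$-steps and enlarging the suspension window by the finite Čech length. Since $H|_{V_\sigma}$ is perfect over the affine $V_\sigma$, it is a bounded complex of finite free modules, so $(j_\sigma)_*(H|_{V_\sigma})$ is built in finitely many steps, within a finite window, from the single sheaf $(j_\sigma)_*\cO_{V_\sigma}$. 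Everything thus reduces to bounding $\mathbf{R}(j_W)_*\cO_W$ for $W\subset\Xs$ a quasicompact open.

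The crux --- and the step I expect to be the main obstacle --- is to place $\mathbf{R}(j_W)_*\cO_W$ inside $\Coprod_{M'}(\cO_\Xs[A',B'])$ with finite $M'$, $A'\le B'$, \emph{without} any finite-global-dimension hypothesis. I would treat this by induction on the size of an affine cover of $\Xs$ via Mayer--Vietoris triangles, the base case being $\Xs$ affine. Over an affine, $\mathbf{R}(j_W)_*\cO_W$ is computed by the finite Čech complex of a cover of $W$ by basic opens, whose terms are localizations $R_g$; the essential point is that each localization is a homotopy colimit $R_g \cong \operatorname{hocolim}(R \xrightarrow{g} R \xrightarrow{g} \cdots)$, realized by the telescope triangle $\bigoplus R \to \bigoplus R \to R_g$, so $R_g \in \Coprod_2(\cO[0,1])$ --- a fixed number of steps in a fixed window. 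This is exactly where arbitrary coproducts buy a finiteness that a naive free resolution cannot, and the bookkeeping of the constants $M',A',B'$ through the nested Čech complexes and the Mayer--Vietoris induction is the delicate part. Finally, since $G$ is a compact generator, the perfect object $\cO_\Xs$ lies in $\Gn_m$ for some finite $m$, hence in $\Coprod_m(G[A'',B''])$ for a finite window; substituting via $\Coprod_a(\Coprod_b(\cdot)) \subset \Coprod_{ab}(\cdot)$ converts the bound phrased in $\cO_\Xs$ into one phrased in $G$, completing the single-object step and hence the theorem.
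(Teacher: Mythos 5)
Your formal reductions are fine: the passage from the containment of subcategories to the single membership $\mathbf{R}j_*H\in\Coprod_M(G[A_0,B_0])$ is justified by the identities $Add(\Coprod_M(\cdot))=\Coprod_M(\cdot)$ and $\Coprod_p\star\Coprod_q=\Coprod_{p+q}$, the \v{C}ech argument over a finite affine cover of $V$ is legitimate (each $j_\sigma$ is an affine morphism because $\Xs$ is separated), and the telescope trick in the affine base case is exactly right, because over $Spec(R)$ the structure sheaf \emph{is} a compact generator. The proof collapses at the step you yourself flagged as the crux, and it collapses for a structural reason, not a bookkeeping one: the claim that $\mathbf{R}(j_W)_*\cO_W\in\Coprod_{M'}(\cO_{\Xs}[A',B'])$ for a general quasicompact separated $\Xs$ is false. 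Every category $\Coprod_{M'}(\cO_{\Xs}[A',B'])$ lies inside the localizing subcategory $\cL(\cO_{\Xs})$ generated by $\cO_{\Xs}$, and $\cO_{\Xs}$ is not a generator of $\Dqc(\Xs)$ in general. Take $\Xs=\mathbb{P}^1_k$ and $W=\mathbb{A}^1$. Since $\cO$ is compact with $\mathbf{R}\mathrm{End}(\cO)=\mathbf{R}\Gamma(\mathbb{P}^1,\cO)=k$ concentrated in degree $0$, the functor $\mathbf{R}\mathrm{Hom}(\cO,-)$ identifies $\cL(\cO)$ with $D(k)$, so every object of $\cL(\cO)$ is a coproduct of shifts of $\cO$. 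The object $\mathbf{R}(j_W)_*\cO_W=(j_W)_*\cO_{\mathbb{A}^1}$ is a sheaf in degree $0$ ($j_W$ is affine), so if it lay in $\cL(\cO)$ it would be $\cO^{(\Lambda)}$ for some set $\Lambda$; applying $j_W^*$ and using $j_W^*(j_W)_*=\mathrm{id}$ gives $\cO_{\mathbb{A}^1}\cong\cO_{\mathbb{A}^1}^{(\Lambda)}$, forcing $|\Lambda|=1$, which contradicts $\Gamma(\mathbb{P}^1,(j_W)_*\cO_{\mathbb{A}^1})=k[x]\neq k$. So this pushforward is not even in the localizing subcategory generated by $\cO_{\Xs}$, let alone finitely built from it in a bounded window.

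Your Mayer--Vietoris induction cannot repair this, because the objects it needs at the inductive step --- the pushforwards $\mathbf{R}(j_{X_i})_*\cO_{X_i}$ from the affine pieces of the cover --- are precisely of the type the counterexample excludes from $\cL(\cO_{\Xs})$. If instead you phrase the whole induction in terms of the compact generator $G$ (which the statement demands anyway, and which costs nothing in the affine base case since there $\cO$ and $G$ generate the same $\Coprod$'s up to bounded loss), the base case survives but the inductive step becomes circular: bounding $\mathbf{R}(j_U)_*\cO_U$ via the cover requires bounding $\mathbf{R}(j_V)_*j_V^*G\cong G\otimes\mathbf{R}(j_V)_*\cO_V$ (projection formula), i.e. an instance of the very statement on the other piece, and symmetrically. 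Breaking that circularity is the actual content of the theorem; note the paper under review does not prove it but quotes Neeman, and Neeman's argument keeps the generator $G$ in play throughout, invoking Thomason--Trobaugh (every compact object of $\Dqc(V)$ is, up to summand and shift, the restriction of a compact object of $\Dqc(\Xs)$) together with the projection formula, rather than attempting to generate pushforwards from $\cO_{\Xs}$. A smaller, fixable slip: a perfect complex on an affine scheme is a bounded complex of finite \emph{projective} modules, not free ones, so your step producing $(j_\sigma)_*(H|_{V_\sigma})$ from $(j_\sigma)_*\cO_{V_\sigma}$ needs the $smd$-absorption lemma ($smd(\Coprod_n(\cB))\subset\Coprod_{2n}(\cB\cup\Sigma\cB)$); but this is minor compared to the gap above.
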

\begin{proof}
\cite[Theorem 6.2]{Amnon}
\end{proof}

\begin{theorem}[Neeman]
\label{Amnthm}
Let $\Xs$ be a quasicompact separated scheme. If $\Xs$ can be covered by affine subschemes $Spec(R_i)$ with each $R_i$ of finite global dimension, then there exists a compact generator $G$  that fast generates $\Dqc(\Xs)$, i.e., $\Dqc(\Xs) = Coprod_n(G[-\infty, \infty])$.
\end{theorem}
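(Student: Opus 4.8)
The plan is to prove the statement by induction on the least number $r$ of affine opens $Spec(R_i)$ of finite global dimension needed to cover $\Xs$. For the base case $r=1$ we have $\Xs = Spec(R)$ with $R$ of finite global dimension $d$, so $\Dqc(\Xs)$ is the derived category of $R$-modules and $G = \cO_{\Xs}$ is a compact generator. Finite global dimension means every module has a free resolution of length at most $d$; reading such a resolution as an iterated cone of coproducts of shifts of $G$ places every module in $\Coprod_{d+1}(G[-d,0])$, and a standard filtration argument --- splitting a $K$-projective resolution into $d+1$ layers using the vanishing of $\mathrm{Ext}^{>d}$ --- extends this to arbitrary, possibly unbounded, objects. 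Hence $\Dqc(\Xs) = \Coprod_{d+1}(G[-\infty,\infty])$ and $G$ fast generates.

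For the inductive step I would write $\Xs = U \cup V$ with $U = Spec(R_1)$ affine and $V$ the union of the remaining $r-1$ affines, and set $W = U \cap V$. Since $\Xs$ is separated, each $U \cap Spec(R_i)$ is affine and is the spectrum of a localization of a ring of finite global dimension, hence itself of finite global dimension; thus $W$ is quasicompact separated and covered by at most $r-1$ such affines, and so are $U$ and $V$. By the inductive hypothesis there are compact generators $G_U, G_V, G_W$ fast-generating $\Dqc(U),\Dqc(V),\Dqc(W)$ in fixed numbers of steps $n_U, n_V, n_W$. I also fix a compact generator $G$ of the whole $\Dqc(\Xs)$, which exists because $\Xs$ is quasicompact separated.

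The engine is the Mayer--Vietoris triangle: for any $F \in \Dqc(\Xs)$,
\[
F \to \textbf{R}j_{U*}j_U^* F \oplus \textbf{R}j_{V*}j_V^* F \to \textbf{R}j_{W*}j_W^* F \to \Sigma F .
\]
Fast generation of the three pieces gives, for the given $F$, finite ranges with $j_U^* F \in \Coprod_{n_U}(G_U[a,b])$ and similarly for $V,W$. Because the open immersions are quasicompact and quasiseparated, each $\textbf{R}j_*$ preserves coproducts and triangles, so $\textbf{R}j_{U*}j_U^* F \in \Coprod_{n_U}(\textbf{R}j_{U*}G_U[a,b])$. Applying Theorem \ref{Amn6.2} with the compact object $H = G_U \in \Dqc(U)$ absorbs this into $\Coprod_{N_U}(G[A,B])$, and likewise for $V$ and $W$. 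Using that each $\Coprod_n$ is closed under coproducts (so the direct sum stays at level $\max(N_U,N_V)$) and that $\Coprod_m \star \Coprod_n = \Coprod_{m+n}$, the triangle exhibits $F$ inside $\Coprod_{N_W}(G[-\infty,\infty]) \star \Coprod_{\max(N_U,N_V)}(G[-\infty,\infty]) = \Coprod_{M}(G[-\infty,\infty])$ with $M = N_W + \max(N_U,N_V)$. Since $M$ does not depend on $F$, we conclude $\Dqc(\Xs) = \Coprod_M(G[-\infty,\infty])$, i.e.\ $G$ fast generates.

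The main obstacle is the \emph{uniformity} of the step count. For the induction to close, the integer $N$ produced by Theorem \ref{Amn6.2} must depend only on $n$ and on the geometry of the immersion $j$, with all dependence on the $F$-varying range $[a,b]$ absorbed into $A$ and $B$; otherwise the number of cones needed to build $F$ would grow with $F$ and fast generation would fail. This bounded-complexity statement is exactly the content that makes \ref{Amn6.2} applicable, and establishing it (controlling how many triangles are needed to rewrite a pushed-forward object in terms of the ambient generator, uniformly in the shift range) is the genuinely hard step. The remaining ingredients --- that $\textbf{R}j_*$ commutes with coproducts for these open immersions and the existence and exactness of the Mayer--Vietoris triangle --- are standard for quasicompact separated schemes and should present no difficulty.
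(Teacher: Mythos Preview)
The paper does not actually prove this theorem: its entire proof is the citation \cite[Theorem~2.1]{Amnon}. Your sketch is essentially the Mayer--Vietoris induction that Neeman himself runs, so there is no meaningful divergence to report between your approach and ``the paper's''; you are reconstructing the cited source rather than this paper.

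Two technical remarks on the sketch itself. First, the sentence ``each $U\cap Spec(R_i)$ is the spectrum of a localization of a ring of finite global dimension'' is not literally correct: an affine open in an affine scheme need not be a principal localization, and in the non-noetherian setting one cannot simply read off finite global dimension from the local rings. The usual repair is to refine the original cover at the outset to one by principal affine opens (standard opens $D(f)$ inside the given $Spec(R_i)$), so that all pairwise intersections really are principal localizations and hence inherit finite global dimension; the induction parameter may grow but remains finite, which is all you need. Second, be aware that in this paper Theorem~\ref{Amn6.2} is also imported from \cite{Amnon} without proof, and in Neeman's paper it sits in a later section than Theorem~2.1; you should check there is no hidden logical dependence before using \ref{Amn6.2} as a black box to reprove \ref{Amnthm}. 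Your diagnosis that the uniformity of the bound $N$ in Theorem~\ref{Amn6.2} is the real content is exactly right.
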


\begin{proof}
\cite[Theorem 2.1]{Amnon}
\end{proof}

\section{Schemes with Separator}

Throughout this section, assume $X$ to be a quasicompact, quasiseparated scheme with separator $f: X \to \Xs$. Without lost of generality, $X$ may be written as $X = U \cup V$ with $U$ and $V$ quasicompact open subschemes of $X$. Let $V$ to be affine and $Z$ be the closed complement of $U$ on $X$, i.e., $Z = X \backslash U$. Then $Z \subset V$ and we have the commuting diagram

$$
\begin{tikzcd}[column sep=5pc]
  Z \arrow{d}{c} \\
  V \arrow{d}{i} \arrow{rd}{j} & { } \\
  X \arrow{r}{f} & \Xs
\end{tikzcd}
$$
where $c: Z \to V$ is a closed immersion and $i: V \to X$, $j: V \to \Xs$ are open immersions.

\begin{remark}
Throughout this section, the index $[A,B]$ is omitted, as the range itself is not relevant for almost all proofs, only that it is finite. That means that $\Coprod _N(G[A,B])$ for some integers $A < B$ is written as $\Coprod _N(G)$.
Unless otherwise specified, $G$ will be the compact strong generator of $\Dqc(\Xs)$, which exists by Theorem \ref{Amnthm}.
\end{remark}

\begin{lemma}
\label{lem3.1}
Assume $\Xs$ to be a quasicompact, separated scheme and $V \subset \Xs$ an open subscheme. For $P \in \Dperf(V )$, let $j: V \to \Xs$ be the open immersion and $G$ the compact strong generator of $\Dqc(\Xs)$ . Then the pushforward $j_*P$ is in $\text{Coprod}_N(G)$.
\end{lemma}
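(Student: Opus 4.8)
The goal of Lemma~\ref{lem3.1} is to show that for an open immersion $j \colon V \to \Xs$ into a quasicompact separated scheme, with $G$ the compact fast generator of $\Dqc(\Xs)$ supplied by Theorem~\ref{Amnthm}, the pushforward $j_*P$ of a perfect complex $P$ on $V$ lands in $\Coprod_N(G)$ for some $N$. The plan is to recognize this as essentially a direct application of Theorem~\ref{Amn6.2}. That theorem already says precisely that for a compact object $H$ of $\Dqc(V)$ and any integers $n, a \leq b$, there exist $N, A \leq B$ with $\Coprod_n(\mathbf{R}j_*H[a,b]) \subset \Coprod_N(G[A,B])$. So the first step is to identify $P \in \Dperf(V)$ with a compact object $H$ of $\Dqc(V)$ — this is legitimate since $\Dperf(V)$ is exactly the subcategory of compact objects of $\Dqc(V)$ for $V$ quasicompact quasiseparated — and to observe that the ordinary (derived) pushforward $j_*P$ written in the lemma is the $\mathbf{R}j_*H$ appearing in the theorem.

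Once the identification is made, I would apply Theorem~\ref{Amn6.2} with the simplest nontrivial parameters: take $n = 1$ and $a = b = 0$, so that $\mathbf{R}j_*H[0,0]$ is just $\mathbf{R}j_*H = j_*P$ itself (up to the single allowed suspension index, which is trivial). Then $j_*P \in \Coprod_1(j_*P) = \mathrm{Add}(j_*P[0,0]) \subset \Coprod_N(G[A,B])$ for the integers $N, A \leq B$ produced by the theorem. Suppressing the range $[A,B]$ per the running convention of the section, this reads $j_*P \in \Coprod_N(G)$, which is exactly the claim. The only genuine content is matching notation between the lemma's $j_*$ and the theorem's $\mathbf{R}j_*$, and confirming that a single perfect complex sits inside $\Coprod_1$ of itself.

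The step I expect to need the most care, though it is conceptually minor, is justifying that $j_* P$ (as written) coincides with the \emph{derived} pushforward $\mathbf{R}j_*H$: since $V \to \Xs$ is an open immersion, $j_*$ is already exact on quasicoherent sheaves and requires no derived correction, so $j_*P = \mathbf{R}j_*P$ and there is no subtlety — but this should be stated explicitly so the invocation of Theorem~\ref{Amn6.2} is clean. A secondary point worth a sentence is why $P$ being perfect (hence compact in $\Dqc(V)$) is what licenses applying the theorem, since Theorem~\ref{Amn6.2} hypothesizes that $H$ be a compact object of $\Dqc(V)$. No genuine obstacle arises here; the lemma is a convenient repackaging of Neeman's Theorem~\ref{Amn6.2} in the degenerate case where the source-side complexity $n$ and suspension window $[a,b]$ are taken to be trivial, and its purpose is presumably to streamline later arguments in Section~3 where $j_*P$ must be controlled inside $\Coprod_N(G)$.
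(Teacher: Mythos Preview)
Your approach is correct and essentially identical to the paper's: both arguments simply invoke Theorem~\ref{Amn6.2} with $H = P$ (a compact object of $\Dqc(V)$), the paper just phrases the setup slightly differently by first noting $P \in \Coprod_M(j^*G)$.

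One small correction, however: your justification that ``$j_*$ is already exact on quasicoherent sheaves and requires no derived correction'' is false in general --- open immersions have exact \emph{pullback}, not pushforward (e.g.\ $j\colon \A^2\setminus\{0\}\hookrightarrow \A^2$ has $R^1 j_*\cO \neq 0$). This does not actually damage the proof, since throughout Section~3 the symbol $j_*$ is already understood to mean the derived pushforward $\mathbf{R}j_*$; just drop that sentence rather than try to argue that the underived and derived functors agree.
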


\begin{proof}
Notice that $V$ and $\Xs$ are separated and since $P$ is in $\Coprod _M(j^*G)$, for $G$ is a global generator, it follows from Theorem \ref{Amn6.2} that $j_*(P) \in \Coprod _N(G)$.
\end{proof}

\begin{prop}
\label{prop3.2}
Let $P \in \Dqc(V \text{on } Z)$, $i: V \to X$ and $j: V \to \Xs$ be the open immersions. Then $i_*P$ is a retract of $f^*j_*P$.
\end{prop}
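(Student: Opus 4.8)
The plan is to realise $f^{*}j_{*}P$ by base change along the separator and then to split off $i_{*}P$ as a direct summand. Since $V$ is affine it is separated over the base, so by Proposition \ref{hopenimm}(i) the separator restricts to an open immersion $j = f\circ i\colon V \to \Xs$ onto the open set $W := f(V)$. Setting $\tilde V := f^{-1}(W) = V\times_{\Xs}X$, I form the Cartesian square
$$
\begin{tikzcd}[column sep=4pc]
\tilde V \arrow{r}{\tilde j}\arrow{d}[swap]{\tilde f} & X \arrow{d}{f}\\
V \arrow{r}{j} & \Xs
\end{tikzcd}
$$
in which $\tilde j$ is an open immersion (base change of $j$) and $\tilde f$ is again a surjective local isomorphism (base change of $f$), hence flat. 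The identity of $V$ together with $i$ induces the canonical section $\iota\colon V \hookrightarrow \tilde V$, an open immersion satisfying $\tilde f\circ\iota = \mathrm{id}_{V}$ and $\tilde j\circ\iota = i$. Because $f$ is flat, flat base change gives $f^{*}j_{*}P \cong \tilde j_{*}\tilde f^{*}P$, while $i_{*}P = \tilde j_{*}\iota_{*}P$. It therefore suffices to exhibit $\iota_{*}P$ as a retract of $\tilde f^{*}P$ in $\Dqc(\tilde V)$ and then apply $\tilde j_{*}$.

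The key point is a splitting of the support of $\tilde f^{*}P$, and this is exactly where the local isomorphism property is used. As $P$ is supported on $Z$ and $\tilde f$ is flat, $\tilde f^{*}P$ is supported on the closed set $\tilde f^{-1}(Z)$. Now $\iota(Z) = \iota(V)\cap \tilde f^{-1}(Z)$ is open in $\tilde f^{-1}(Z)$ because $\iota(V)$ is open in $\tilde V$, and it is closed in $\tilde f^{-1}(Z)$ because $Z$ is closed in $X$ and hence in the open set $\tilde V$. Thus $\iota(Z)$ is clopen in $\tilde f^{-1}(Z)$, giving a decomposition into disjoint closed subsets $\tilde f^{-1}(Z) = \iota(Z)\sqcup Z'$, where $Z' \subseteq \tilde V\setminus V$. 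Consequently $\tilde f^{*}P$ decomposes as $\tilde f^{*}P \cong E_{Z}\oplus E_{Z'}$, with $E_{Z}$ supported on $Z$ and $E_{Z'}$ on $Z'$ (the standard support decomposition along the open cover $\{\tilde V\setminus Z',\,\tilde V\setminus Z\}$, on whose overlap $\tilde f^{*}P$ vanishes).

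It remains to identify $E_{Z}$. Since $V$ is an open neighbourhood of $Z$ disjoint from $Z'$, restriction to $V$ yields $E_{Z}|_{V} = (\tilde f^{*}P)|_{V} = \iota^{*}\tilde f^{*}P = (\tilde f\circ\iota)^{*}P = P$; as $E_{Z}$ is supported on $Z$, the excision equivalence $\Dqc(\tilde V\ \text{on}\ Z)\xrightarrow{\sim}\Dqc(V\ \text{on}\ Z)$ (restriction, with inverse $\iota_{*}$) forces $E_{Z}\cong\iota_{*}P$. Hence $\iota_{*}P$ is a direct summand, in particular a retract, of $\tilde f^{*}P$, and applying $\tilde j_{*}$ shows $i_{*}P = \tilde j_{*}\iota_{*}P$ is a retract of $\tilde j_{*}\tilde f^{*}P \cong f^{*}j_{*}P$. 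The main obstacle is precisely the clopen splitting of $\tilde f^{-1}(Z)$: it relies essentially on $\iota$ being an open-immersion section of $\tilde f$, i.e. on the separator being a local isomorphism, which is what lets the sheet of $\tilde V$ lying over $Z$ and coming from $V$ detach from the remaining sheets $Z'$.
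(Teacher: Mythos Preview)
Your argument is correct and the core geometric idea is the same as the paper's: the support of $f^{*}j_{*}P$ breaks into a clopen piece equal to $Z$ and a remainder disjoint from $V$, which forces a direct-sum decomposition whose $Z$-component is $i_{*}P$. The packaging, however, is different. The paper never forms the fibre product $\tilde V$ or invokes flat base change; it stays in $X$, observes that the restriction of $f^{*}j_{*}P$ to $U\cap V$ is $k^{*}j^{*}j_{*}P\cong k^{*}P=0$ (using $f\circ l=j\circ k$ and that $j$ is an open immersion), and concludes that the support of $f^{*}j_{*}P$ lies in the disjoint union $Z\sqcup (X\setminus V)$, giving $f^{*}j_{*}P\simeq R\oplus S$ with $R$ supported on $Z$. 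It then identifies $R$ with $i_{*}P$ by restricting to $V$, where $f$ is an isomorphism onto its image. Your route via $\tilde V=f^{-1}(f(V))$ and the section $\iota$ makes the clopen splitting of $\tilde f^{-1}(Z)$ and the excision step more explicit, and isolates cleanly where the local-isomorphism hypothesis enters (namely, that $\iota$ is an \emph{open} immersion section). The paper's route is shorter but leaves the support decomposition and the identification $R\cong i_{*}P$ a touch implicit. Either way the content is the same decomposition; your version simply factors it through base change.
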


\begin{proof}

First, we show that $f^*j_*P$ is supported on $ Z \coprod W$, by viewing $Z$ as $Z = X \backslash U$ and $W$ as some closed subset of $U \subset X$. Now it suffices to show that the pullback of  $f^*j_*P$ to the intersection $U \cap V$ is zero. That would imply that there exist a closed subset in $X \backslash V \subset U$, say $W$, that contains the remainder (if any) of the  pullback via $f$ of $j_*P$ that is not in $Z$.

Since the diagram

$$
\begin{tikzcd}[column sep=5pc]
  {} & V \arrow{d}{i} \arrow{rd}{j} & { } \\
  U \cap V \arrow{ru}{k} \arrow{r}{l} &
  X \arrow{r}{f} & \Xs
\end{tikzcd}
$$
is commutative (every map, except $f$ is an open immersion), $Z \nsubseteq U \cap V$, using the counit equivalence map one can see that

\begin{align*}
l^*f^*j_*P &\cong k^*j^*j_*P \\
		   &\cong k^*P \\
		   &\cong 0.
\end{align*}

Hence, $f^*j_*P \simeq R \oplus S$, with $S$ supported on $W \subset U$ and $R$ supported on $Z \subset V$. Finally, since $V$ is separated and $f$ is a local isomorphism on separated open subschemes, the restriction of $f^*j_*P$ to $V$ is $i_*P$, i.e., $i_*P \simeq R$.

Therefore $f^*j_*P \simeq i_*P \oplus Q$ and the result follows.\end{proof} 
The next goal is to show that the pullback of a compact generator via a separator is again a compact generator. This will be done in several steps. First, notice that the isomorphism over separated opens property from the separator induces locally the notion of ``\textit{non separated points}''. Those are the points in an open affine for which the separator is not an isomorphism.

\begin{defi}Let $f: X \to Y$ be the separator and $V \subset X$ be an open affine. Define $Z_V$ as the closure of the set $\{x \in V; f^{-1}f(x) \neq \{x\}\}$ (the \textit{non separated points of V}).
\end{defi}

$Z_V$ is then a closed subscheme of $V$.

%\begin{prop}
%\label{ZV closed}
%Let $f: X \to Y$ be the separator, $V \subset X$ be an open affine and $Z_V$ the set of non separated points of $V$. Then $Z_V$ is a closed subscheme.
%\end{prop}
%\begin{proof}
%Suffices to consider $X = f^{-1}f(V)$ and $Y = V$. Consider the closed set $T:= \{X \backslash V\}$. Hence it suffices to show that $Z_V = f(T)$ is closed. Since $f$ is quasicompact, it suffices to show that $f(T)$ is stable under specialization.

%Suppose not, then there exist a specialization $y' \rightsquigarrow x'$ with $x' \in f(T)$ and $y' \in V \backslash f(T)$. In other words, $y'$ generalizes $x'$. But by definition, $y'$ has a unique fiber $y \in V \subset X$. On the other hand, there exist $x \in T$ such that $f(x) = x'$. Since $f$ is flat, there would exist a lifting of the generalization to $y \rightsquigarrow x$ with $y \notin T$ which is a contradiction, since $T$ is closed.

%Therefore $f(T) = Z_V$ is closed.
%\end{proof}

Recall that the localization sequence for $\Dqc(X)$ holds true for $X$ quasicompact and quasiseparated, i.e, for $U \subset X$ quasicompact open and $Z$ the closed complement, one have

\begin{align*}
\Dqc(X \ \textbf{on} \ Z) \rightarrow \Dqc(X) \rightarrow \Dqc(U)
\end{align*}

Moreover, $G \in \Dqc(X)$ is a compact generator if, and only if, for any $F \in \Dqc(X)$, $Hom_{\Dqc(X)} ( G, F ) = 0$ implies $F = 0$.

With that in mind, it is possible to prove that the pullback via a separator of a compact generator is a compact generator.

\begin{prop}
\label{lem3.3}
Let $f: X \to Y$ be the separator. If $G \in \textbf{D}_{\textbf{perf}} (Y)$ is a compact generator, then $f^*G \in \textbf{D}_{\textbf{perf}} (X)$ is a compact generator.
\end{prop}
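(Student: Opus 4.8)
The plan is to prove the stronger statement that $f^*G$ is a compact generator of $\Dqc(X)$; since $\textbf{D}_{\textbf{perf}}(X)$ is precisely the subcategory of compact objects, this yields the proposition once we observe that $f^*G$ is perfect. Compactness is immediate: $f$ is a flat, locally finitely presented local isomorphism, so the derived pullback $f^*$ carries perfect complexes to perfect complexes, whence $f^*G \in \textbf{D}_{\textbf{perf}}(X) = \Dqc(X)^c$. It then suffices to show that the localizing subcategory $\langle f^*G\rangle_{\mathrm{loc}} \subset \Dqc(X)$ generated by $f^*G$ (closed under coproducts, triangles, and, by B\"okstedt--Neeman, direct summands) is all of $\Dqc(X)$; for a compact object this is equivalent to the orthogonality criterion $(f^*G)^\perp = 0$ recalled above.

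I would argue by induction on the number of affine opens needed to cover $X$. In the base case $X$ is separated, so by Proposition \ref{hopenimm} the separator $f$ is an isomorphism and $f^*G \cong G$ generates trivially. For the inductive step write $X = U \cup V$ with $V$ affine and $U$ a union of fewer affines, and set $Z = X \setminus U \subset V$, reproducing the diagram fixed at the start of the section with $a \colon U \to X$, $i\colon V\to X$, $j\colon V \to \Xs$. The two ingredients to establish are: (a) that $(f^*G)|_U$ generates $\Dqc(U)$, and (b) that $\Dqc(X\ \textbf{on}\ Z) \subset \langle f^*G\rangle_{\mathrm{loc}}$.

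For (a), note that $U$ is retrocompact in $X$, so by the argument in Corollary \ref{Cor2.5.3} the restriction $f|_U \colon U \to f(U)$ is a separator of $U$ with $f(U)$ open, hence separated, in $\Xs$. Since $G$ is a compact generator of $\Dqc(\Xs)$, its restriction $G|_{f(U)}$ is a compact generator of $\Dqc(f(U))$, and functoriality of pullback gives $(f^*G)|_U \cong (f|_U)^*\big(G|_{f(U)}\big)$; the inductive hypothesis then yields that $(f^*G)|_U$ generates $\Dqc(U)$. For (b), recall that $i_* \colon \Dqc(V\ \textbf{on}\ Z) \to \Dqc(X\ \textbf{on}\ Z)$ is an equivalence, so every object of $\Dqc(X\ \textbf{on}\ Z)$ has the form $i_*P$; by Proposition \ref{prop3.2}, $i_*P$ is a retract of $f^*j_*P$, and since $j_*P \in \Dqc(\Xs) = \langle G\rangle_{\mathrm{loc}}$ we obtain $f^*j_*P \in \langle f^*G\rangle_{\mathrm{loc}}$, whence $i_*P \in \langle f^*G\rangle_{\mathrm{loc}}$ by closure under direct summands.

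Finally I would glue (a) and (b) using the localization triangle $\Gamma_Z F \to F \to Ra_*a^*F$ attached to the open immersion $a$, where $\Gamma_Z F \in \Dqc(X\ \textbf{on}\ Z)$. Applying $Ra_*$ (which preserves coproducts and triangles, as $a$ is quasicompact and quasiseparated) to $a^*F \in \Dqc(U) = \langle a^*f^*G\rangle_{\mathrm{loc}}$ shows $Ra_*a^*F \in \langle Ra_*a^*f^*G\rangle_{\mathrm{loc}}$; and the same triangle applied to $f^*G$, together with $\Gamma_Z f^*G \in \Dqc(X\ \textbf{on}\ Z) \subset \langle f^*G\rangle_{\mathrm{loc}}$, gives $Ra_*a^*f^*G \in \langle f^*G\rangle_{\mathrm{loc}}$. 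Hence $Ra_*a^*F \in \langle f^*G\rangle_{\mathrm{loc}}$, and since $\Gamma_Z F \in \Dqc(X\ \textbf{on}\ Z) \subset \langle f^*G\rangle_{\mathrm{loc}}$ as well, the triangle forces $F \in \langle f^*G\rangle_{\mathrm{loc}}$. As $F$ was arbitrary, $\langle f^*G\rangle_{\mathrm{loc}} = \Dqc(X)$. I expect the gluing step to be the main obstacle, and it is exactly where Proposition \ref{prop3.2} is essential: without it there is no reason for $\Dqc(X\ \textbf{on}\ Z)$ to sit inside $\langle f^*G\rangle_{\mathrm{loc}}$. Indeed, the naive adjunction argument only gives $Rf_*F = 0$ from orthogonality, which does \emph{not} imply $F=0$ precisely because the separator collapses the non-separated points.
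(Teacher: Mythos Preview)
Your argument is correct and takes a genuinely different route from the paper's own proof. Both proceed by induction on the number of affines in a cover, but the inductive step is organized very differently.

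The paper argues via the orthogonality criterion: assuming $\mathrm{Hom}(f^*G,F)=0$, it deduces $f_*F=0$ by adjunction and then runs a rather delicate support argument. It introduces the closed set $Z_V$ of \emph{non-separated points} of an affine $V$, replaces $W$ by $W\setminus Z_V$ so that $L=V\cap W$ is separated, applies a localization triangle over $L$, and chases stalks to reach a contradiction unless $k^*F=0$; a Mayer--Vietoris square then gives $F=0$. Proposition~\ref{prop3.2} is \emph{not} used here at all---the paper only invokes it later, in Proposition~\ref{prop3.4}.

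You instead work with the localizing subcategory $\langle f^*G\rangle_{\mathrm{loc}}$ and bring Proposition~\ref{prop3.2} forward: it gives $\Dqc(X\ \textbf{on}\ Z)\subset\langle f^*G\rangle_{\mathrm{loc}}$ in one stroke, and then the single localization triangle $\Gamma_Z F\to F\to Ra_*a^*F$, together with the inductive hypothesis on $U$, finishes the argument. This is cleaner and sidesteps the non-separated-points analysis entirely; it also makes transparent why Proposition~\ref{prop3.2} is the crux (as your closing remark acknowledges). The paper's approach, on the other hand, keeps Proposition~\ref{lem3.3} logically independent of Proposition~\ref{prop3.2} and isolates the geometric content about where the separator fails to be injective. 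One small wording issue: your base case should say ``$X$ is affine, hence separated'' rather than just ``$X$ is separated''; this does not affect correctness.
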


\begin{proof}

%%%%%%%%%%%%%%%%%%%%%%%%%%%%%%%%%%%%%%%%%%%%%%

Since $X$ is quasicompact and quasiseparated, it suffices to show that the restriction of $f^*(G)$ is a generator for any affine open and for any quasicompact open subscheme of $X$.

For the affine case consider the commutative diagram 

 \[
\begin{tikzcd}
 &  X \arrow{dr}{f} &  \\
 V \arrow[hookrightarrow]{ur}{i} \arrow{r}{Id} &  V \arrow[hookrightarrow]{r}[swap]{\restr{f}{V}}& Y   
\end{tikzcd}
\]
where $V \hookrightarrow X$ is an open affine.

The restriction to $V$ is indeed a generator, since $\restr{f}{V}$ is an isomorphism on $f(V)$, i.e,

\begin{align*}
i^* f^*(G) = (f \circ i)^* (G) = (\restr{f}{V})^*G = G_{\Dqc(V)}
\end{align*}
where $G_{\Dqc(V)}$ is the restriction of $G$ to $\Dqc(V)$.

Notice this remains true if we replace $V$ by any separated open subscheme of $V$, a fact that will be used soon.

Next, one proceeds with the case of some quasicompact, quasiseparated open subscheme, say $U \subset X$, by induction on the number of affines covering $U$. The case where $U$ can be covered by only one affine is exactly the case above. So we may assume $U$ may be covered by $n$ affines and the property is true for any quasicompact, quasiseparated subscheme covered by up to $n-1$ affines.

Let $V$ be some open affine from the cover of $U$ and let $W$ be the union of the other $n-1$ affines, i.e. $U = V \cup W$. Let $Z_{V} \subset V$ be the closed subscheme of non separated points of $V$. Let $L = V \cap W$. We have three morphisms $i: V \hookrightarrow U$, $j: W \hookrightarrow U$ and $k: L \hookrightarrow U$.

% and assume that without loss of generality, that $W \cap Z_{bad} = \emptyset$. Therefore $U \cap W = L \in U_{good}$ an open subscheme in the separated points of $U$.

Let $F \in \Dqc(U)$ and consider the square 

\[
\begin{tikzcd}
 Hom_{\Dqc(U)}(f^*G, F) \arrow{r} \arrow{d} &  Hom_{\Dqc(W)}(j^*f^*G, j^*(F)) \arrow{d}  \\
 Hom_{\Dqc(V)}(i^*f^*G, i^*(F)) \arrow{r} &  Hom_{\Dqc(L)}(k^*f^*G,k^*(F))  
\end{tikzcd}
\]
in the derived category, where an abuse of notation was used for $f = \restr{f}{U}$.

The goal is to show that the restriction of $f^*G$ to each category is a compact generator. By the induction hypothesis, the restrictions of $f^*G$ to $W, V$ are generators in each respective derived category.

For the purpose of the proof, one may assume that $W \cap Z_V = \emptyset$, as one may consider the complement of $(Z_V)^c = U \backslash Z_V$ and define $\hat{W} = W \cap (Z_V)^c$. Indeed, with $\hat{j}:\hat{W} \hookrightarrow W$ the open immersion, it suffices to show that the further restriction $\hat{j}^*j^*f^*G$ is again a generator. Notice that for any $H \in \Dqc(\hat{W})$ such that $\hat{j}_* H = 0$, one has that $H = 0$, as $(\hat{j}_*H)(U) = H(U \cap \hat{W})$. So $ 0 = Hom( \hat{j}^*f^*G, H) = Hom( f^*G, \hat{j}_* H)$ implies that $\hat{j}_*H = 0$, which implies that $H=0$. Hence, even though $\hat{W}$ may be covered by more than $n-1$ affines, one may replace $W$ for $\hat{W}$ and still get the same square of $Hom$s as before, with the restriction of $f^*G$ being a compact generator for $\hat{W}$.

Therefore, without loss of generality assume that $U = V \cup W$, with $W \cap Z_V = \emptyset$. In particular, $L \cap Z_V = \emptyset$. Now $L$ is an open separated subscheme of the affine $V$, hence the restriction of $f^*G$ to $L$ is again a generator. Assume that $Hom_{\Dqc(U)}(f^*G, F) = 0$. Then, the proof will follow if $F = 0$.

By adjunction, one have that $f_*F[n] = 0$ for all $n \in \Z$. Consider the localization sequence  

\begin{align*}
\Dqc(U \ \textbf{on} \ Z) \rightarrow \Dqc(U) \rightarrow \Dqc(L)
\end{align*}
in the derived category over $k : L \hookrightarrow U$ with $Z$ the complement of $L$, which induces the triangle

\begin{align*}
M \rightarrow F \rightarrow k_* k^* F.
\end{align*}

Applying $f_*$, one obtains the triangle

\begin{align*}
f_*M \rightarrow f_*F \rightarrow f_* k_* k^* F.
\end{align*}

By the hypothesis, the middle term is zero, and hence $f_*M[1] \simeq f_* k_* k^* F$. The claim is that $k^*F = 0$. If that was not the case, the support of $k^*F$ would not be empty, which would imply the existence of a point $p \in L$  such that $p \in Supph(k^*F)$. Let $U_p \subset L$ be some neighborhood of $p$ that satisfies the condition for $p$ in $Supph(k^*F)$. Consider the diagram

%\[
%\begin{tikzcd}
% U_p \arrow[hookrightarrow]{r} \arrow{d}{\simeq}[swap]{\restr{f}{U_p}} &  L \arrow[hookrightarrow]{r}{k} \arrow{d}{\simeq}[swap]{\restr{f}{L}} & U \arrow[hookrightarrow]{r}{i} \arrow{d}[swap]{\restr{f}{U}} & X \arrow{d}{f}  \\
% \overline{U_p} \arrow[hookrightarrow]{r} &  \overline{L} \arrow[hookrightarrow]{r}%{\overline{k}} & U_{\textit{sep}} \arrow{r} & Y
%\end{tikzcd}
%\]

\[
\begin{tikzcd}
 U_p \arrow[hookrightarrow]{r} \arrow{d}{\simeq}[swap]{\restr{f}{U_p}} &  L \arrow[hookrightarrow]{r}{k} \arrow{d}{\simeq}[swap]{\restr{f}{L}} & U  \arrow{d}[swap]{f}  \\
 \overline{U_p} \arrow[hookrightarrow]{r} &  \overline{L} \arrow[hookrightarrow]{r}{\overline{k}} & U_{\textit{sep}}
\end{tikzcd}
\]

After a diagram chase, one sees that

\begin{align*}
f_* k_* k^* F &=  \overline{k}_* (\restr{f}{L})_* k^* F \\
 &= (\overline{k} \circ \restr{f}{L})_* k^* F.
\end{align*}

By the definition of the derived pushfoward functor, the cochain complex evaluated at $\overline{U_p} \subset U_{\textit{sep}}$ agrees with $k^*F$, i.e.,

\begin{align*}
(f_* k_* k^* F)(\overline{U_p}) &= ((\overline{k} \circ \restr{f}{L})_* k^* F )(\overline{U_p}) \\
 &= (k^* F) ((\overline{k} \circ \restr{f}{L})^{-1} (\overline{U_p})) \\
 &= k^*F({U_p}) \\
\end{align*}

So $(f_* M[1])(\overline{U_p}) = k^*F(U_p)$, but that is absurd, since $U_p \simeq f^{-1}(\overline{U_p})$ would imply that $p \notin Z$ is in the support of $M[1] \in \Dqc(U \textbf{\ on \ } Z)$. Therefore $k^* F = 0$.

Going back to the square of morphism, the top left term is $0$ by hypothesis and the bottom right is also zero, since $k^* F = 0$. Hence the whole square is zero. That means that each restriction of $F$ is zero, i.e., $j^*(F) = i^*(F) = k^*(F) = 0$
Using another square, now for $\Dqc(U)$, one may glue each restriction back to $F$. Hence $F = 0$ as desired. \end{proof}

To prove the main theorem, a standard induction argument over the covering of $X$ will be used. The following proposition will provide the induction hypothesis needed. The notation of subschemes and morphisms will follow the diagram shown in the beginning of this section.

\begin{prop}
\label{prop3.4}
Let $X$ be a quasicompact and quasiseparated scheme that admits a separator $f : X \to \Xs$.  Assume $X$ can be covered by affine subschemes $Spec(R_i)$ with each $R_i$ of finite global dimension. Moreover, let $X = U \cup V$ with $U$ and $V$ open subschemes of $X$ and assume $V$ to be affine. Consider the diagram
$$
\begin{tikzcd}[column sep=5pc]
  U \arrow{d}{u} \arrow{rd}{o} & { } \\
  X \arrow{r}{f} & \Xs
\end{tikzcd}
$$
where $u: U \to X$ is the open immersion and $o: U \to \Xs$ the induced map. 

Let $G$ be the strong generator from $\Dqc(\Xs)$. Then $u_*o^*G$ can be built in finitely many steps from $f^*G$.
\end{prop}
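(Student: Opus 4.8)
The plan is to compare $u_* o^* G = u_* u^* f^* G$ with $f^* G$ through the localization triangle attached to the open immersion $u: U \to X$ and its closed complement $Z = X \backslash U$. Writing $i_Z : Z \to X$ for the closed immersion and $\Gamma_Z = i_{Z*} i_Z^!$ for the associated local-cohomology functor, the recollement available for the quasicompact, quasiseparated scheme $X$ supplies a triangle
$$\Gamma_Z(f^* G) \to f^* G \to u_* u^* f^* G \to \Sigma\, \Gamma_Z(f^* G),$$
and since $o = f u$ the third term is exactly $u_* o^* G$. Because $f^* G$ already lies in $\Coprod_1(f^* G)$, it will suffice to prove that $\Gamma_Z(f^* G)$ is built from $f^* G$ in finitely many steps; the conclusion then follows by rotating the triangle and using the $\star$-operation, which gives $u_* o^* G \in \Coprod_1(f^* G) \star \Coprod_M(f^* G) = \Coprod_{M+1}(f^* G)$ as soon as $\Gamma_Z(f^* G) \in \Coprod_M(f^* G)$.

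First I would exploit that $Z \subset V$. Since $\Gamma_Z(f^* G)$ is supported on $Z$, which is disjoint from $U$, restriction to the open $V$ is an equivalence onto $\Dqc(V \ \textbf{on} \ Z)$ by excision, so $\Gamma_Z(f^* G) = i_* P$ with $P := i^*\Gamma_Z(f^* G) \in \Dqc(V \ \textbf{on} \ Z)$. As local cohomology commutes with restriction to the open $V$ and $i^* f^* G = j^* G$, one identifies $P = \Gamma_Z^V(j^* G)$, the local cohomology on the affine $V$ of the perfect complex $j^* G$.

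Next I would apply Proposition \ref{prop3.2} to $P \in \Dqc(V \ \textbf{on} \ Z)$, which yields that $i_* P$ is a retract of $f^* j_* P$. This is exactly the step where the argument must avoid a naive use of Lemma \ref{lem3.1}: the object $P = \Gamma_Z^V(j^* G)$ is in general \emph{not} perfect, so $j_* P$ is not covered by that lemma. Instead I would invoke the full strength of Theorem \ref{Amnthm}: since $f$ is a surjective local isomorphism, $\Xs$ is again covered by affine opens of finite global dimension, so $\Dqc(\Xs) = \Coprod_n(G[-\infty,\infty])$ is fast generated by $G$. As $j_* P$ is simply \emph{some} object of $\Dqc(\Xs)$, this already places $j_* P \in \Coprod_n(G)$ for a suitable finite range, with no perfectness hypothesis on $P$. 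Recognizing that the argument should route through fast generation of the whole category $\Dqc(\Xs)$, rather than through pushforwards of perfect complexes as in Lemma \ref{lem3.1}, is the main obstacle and the conceptual crux of the proof.

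Finally I would transport this back to $X$. Because $f^*$ is exact and preserves coproducts, $f^*$ carries $\Coprod_n(G)$ into $\Coprod_n(f^* G)$, so $f^* j_* P \in \Coprod_n(f^* G)$. The object $i_* P = \Gamma_Z(f^* G)$ is a direct summand of $f^* j_* P$, hence $i_* P \in smd(\Coprod_n(f^* G)) \subset \Coprod_{2n}(f^* G)$ by Lemma \ref{Amnlemma1.8} (the extra suspension being absorbed into the suppressed range $[A,B]$). Taking $M = 2n$ and feeding this into the rotated triangle of the first paragraph gives $u_* o^* G \in \Coprod_{2n+1}(f^* G)$, so that $u_* o^* G$ is built from $f^* G$ in finitely many steps. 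The remaining verifications are routine: the excision identification of the support term and the compatibility of $f^*$ with the $\Coprod$ construction.
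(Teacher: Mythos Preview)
Your argument is correct and shares the same architecture as the paper's: use the localization triangle for $u\colon U\to X$ to reduce to bounding the correction term $Q=\Gamma_Z(f^*G)\cong i_*P$ with $P\in\Dqc(V\ \textbf{on}\ Z)$, and then control $i_*P$ in terms of $f^*G$ via Proposition~\ref{prop3.2}. The one genuine difference is in how that last bound is obtained. The paper first invokes fast generation of $\Dqc(V)$ (Theorem~\ref{Amnthm} applied to the affine $V$) to write $P\in\Coprod_M(G')$ for a \emph{compact} generator $G'$ of $\Dqc(V)$, so that $i_*P\in\Coprod_M(i_*G')$; it then applies Proposition~\ref{prop3.2} and Theorem~\ref{Amn6.2} to the perfect object $G'$ to place $i_*G'$ inside $\Coprod_N(f^*G)$. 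You instead apply Proposition~\ref{prop3.2} directly to $P$ itself and use fast generation of the whole of $\Dqc(\Xs)$ to handle $j_*P$, bypassing Theorem~\ref{Amn6.2} entirely. Your route is a bit more streamlined and has the advantage that Proposition~\ref{prop3.2} is invoked on an object that is genuinely supported on $Z$, exactly as its hypothesis demands; the paper's detour through $G'$ buys compactness (so that Theorem~\ref{Amn6.2}, which requires a perfect input, becomes available), at the cost of applying Proposition~\ref{prop3.2} to a generator $G'$ of $\Dqc(V)$ not obviously supported on $Z$.
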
 

\begin{proof}

First, by Proposition \ref{hopenimm}, $X$ being covered by affine subschemes of finite global dimension implies $\Xs$ also can be covered by affines with the same properties. Hence, by Theorem \ref{Amnthm} there exists a $G$ that fast generates $\Dqc (\Xs)$.

Let $G$ be the generator of $\Dqc (\Xs)$ as above. One can fit $f^*G$ into a triangle
$$
\begin{tikzcd}%
        Q \arrow[r] & f^*G \arrow[r] & u_*u^*f^*G \simeq u_*o^*G
    \end{tikzcd}%
,$$
so it suffices to show that $Q \in \Coprod _N(f^*G)$ for some $N \in \Z$.

Let $Q$ be as above. Since $Q$ vanishes on $U$, and $V$ is assumed to be affine, by Thomason-Trobaugh there exists a closed subscheme $Z \subset V$  and $P \in \Dqc( V )$ such that $Q \simeq i_*P$. 

By Theorem \ref{Amnthm} there exists a fast generator $G' \in  \Dqc( V)$. Hence, there exists $M \in \Z$, such that $ P \in \Coprod _M(G')$. 

So $Q \simeq i_*P$ is in $i_* \Coprod _M(G') \subseteq \Coprod _M(i_*G')$.

But, by Proposition \ref{prop3.2}, $i_*G'$ is a retract of $f^*j_*G'$, which again by Theorem \ref{Amn6.2} is in $\Coprod _N(f^*G)$ for some $N$. Hence $Q \in \Coprod _N(f^*G)$, proving that $u_*o^*G$ is indeed generated by $f^*G$. \end{proof}

%Hence $u_*o^*G$ belongs to 
%
%\begin{align*}
% &\Coprod _{M+1} (i_*G') \star \Coprod _1 (f^*G) \\
% \subset & smd [ \Coprod _{M+1} ( i_* G' \oplus f^*G) \star \Coprod _1 (i_* G' \oplus f^*G)].
%\end{align*}
%
%y \ref{Amnlemma1.8}, $u_*o^*G \in \Coprod _{2(M +2)} (i_*G' \oplus f^*G)$.

Now we move to the main Theorem \ref{main}. The goal is to show that one may pull back a fast generator via a separator and obtain a fast generator. By Proposition \ref{propreduc}, this implies Theorem \ref{main}.

\begin{theorem}
\label{thm3.5}
Let $X$ be a quasicompact and quasiseparated scheme that admits a separator $f : X \to \Xs$ and let $G$ be a compact fast generator of $\Dqc(\Xs)$. Assume $X$ can be covered by affine subschemes $Spec(R_i)$ with each $R_i$ of finite global dimension. Then there exists an object $H$ in $\textbf{D}_{\textbf{perf}} (X)$ that fast generates $\Dqc(X)$.
\end{theorem}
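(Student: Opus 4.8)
The plan is to show that $H=f^{*}G$ itself is a compact fast generator of $\Dqc(X)$. That $f^{*}G$ is compact is immediate, since the pullback of a perfect complex is perfect, and that it is a generator is exactly Proposition \ref{lem3.3}; by Proposition \ref{propreduc} it then suffices to exhibit a single integer $N$, independent of the chosen object, with $\Dqc(X)=\Coprod_{N}(f^{*}G[-\infty,\infty])$. I would prove this by induction on the least number of affines needed to cover $X$. For the base case $X=\mathrm{Spec}(R)$ is affine, hence separated, so $f$ is an isomorphism by Proposition \ref{hopenimm}(i); thus $\Dqc(X)\cong\Dqc(\Xs)$ and $f^{*}G\cong G$ already fast generates by hypothesis.

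For the inductive step, write $X=U\cup V$ with $V$ affine and $U$ a quasicompact open covered by one fewer affine, let $Z=X\setminus U\subseteq V$ be the closed complement, and keep the open immersions $u\colon U\to X$, $i\colon V\to X$, $j\colon V\to\Xs$, together with $o=f\circ u\colon U\to\Xs$. The first point is that the theorem statement, taken as induction hypothesis, applies to $U$: the restriction $f|_{U}\colon U\to f(U)=:\overline U$ is a separator of $U$ (the image is open by Proposition \ref{hopenimm}(i), as $U$ is retrocompact), $\overline U$ is an open of $\Xs$ and is therefore covered by affines of finite global dimension (their existence on $\Xs$ follows from Proposition \ref{hopenimm}(i) applied to the given cover of $X$, and finiteness passes to principal localizations), and $G|_{\overline U}=\overline j^{*}G$, with $\overline j\colon\overline U\hookrightarrow\Xs$ the inclusion, is a compact fast generator of $\Dqc(\overline U)$ because $\overline j^{*}$ is essentially surjective and preserves coproducts, triangles and summands. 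Hence by induction $\Dqc(U)=\Coprod_{m}(o^{*}G[-\infty,\infty])$ for some $m$, where $o^{*}G=u^{*}f^{*}G$.

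Now fix an arbitrary $F\in\Dqc(X)$ and use the localization triangle $Q_{F}\to F\to u_{*}u^{*}F$, whose fibre $Q_{F}$ is supported on $Z$ because $u^{*}$ annihilates it. For the right-hand term, $u^{*}F\in\Coprod_{m}(o^{*}G[-\infty,\infty])$ and $u_{*}$ preserves coproducts (as $u$ is a quasicompact, quasiseparated open immersion), so $u_{*}u^{*}F\in\Coprod_{m}(u_{*}o^{*}G[-\infty,\infty])$; Proposition \ref{prop3.4} gives $u_{*}o^{*}G\in\Coprod_{k}(f^{*}G)$ for a fixed $k$, whence $u_{*}u^{*}F\in\Coprod_{mk}(f^{*}G[-\infty,\infty])$. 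For the left-hand term, excision over the affine $V$ (Thomason--Trobaugh) writes $Q_{F}\cong i_{*}P$ with $P\in\Dqc(V\ \textbf{on}\ Z)$; since $V$ has finite global dimension, Theorem \ref{Amnthm} provides a compact fast generator $G'$ of $\Dqc(V)$ with $P\in\Coprod_{m'}(G')$, so that $j_{*}P\in\Coprod_{m'}(j_{*}G')$ and $j_{*}G'\in\Coprod_{N'}(G)$ by Theorem \ref{Amn6.2} (equivalently Lemma \ref{lem3.1}). Applying $f^{*}$ and invoking Proposition \ref{prop3.2} --- legitimate precisely because $P$ is genuinely supported on $Z$ --- exhibits $Q_{F}=i_{*}P$ as a retract of $f^{*}j_{*}P\in\Coprod_{m'N'}(f^{*}G)$, so $Q_{F}\in\Coprod_{2m'N'}(f^{*}G[-\infty,\infty])$ after absorbing the summand operation via Lemma \ref{Amnlemma1.8}. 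Feeding both bounds into the triangle and using $\Coprod_{a}(f^{*}G)\star\Coprod_{b}(f^{*}G)=\Coprod_{a+b}(f^{*}G)$ yields $F\in\Coprod_{2m'N'+mk}(f^{*}G[-\infty,\infty])$.

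The integer $N=2m'N'+mk$ is independent of $F$, so $\Dqc(X)=\Coprod_{N}(f^{*}G[-\infty,\infty])$ and $H=f^{*}G$ fast generates. The main obstacle is exactly this \emph{uniformity}: every bound that enters --- $m$ from the inductive fast generation of $\Dqc(U)$, $k$ from Proposition \ref{prop3.4}, $m'$ from fast generation of $\Dqc(V)$, and $N'$ from Theorem \ref{Amn6.2} --- must be chosen once and for all, independently of $F$, and the successive absorptions $\Coprod_{n}(\Coprod_{m}(\,\cdot\,))\subseteq\Coprod_{nm}$ and $smd(\Coprod_{n})\subseteq\Coprod_{2n}$, together with the suppressed suspension ranges $[A,B]$ controlled by Corollary \ref{corcoprod}, have to be tracked so that the number of steps does not blow up. The secondary technical points are that $u_{*}$ preserves coproducts and that the fibre $Q_{F}$ of the localization triangle really is of the form $i_{*}P$ with $P$ supported on $Z$, so that Proposition \ref{prop3.2} may be applied to $P$ itself.
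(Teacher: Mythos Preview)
Your argument is correct and follows the same inductive skeleton as the paper: induct on the number of affines, with the separator being an isomorphism in the base case, and use Proposition~\ref{prop3.4} together with the inductive fast generation of $\Dqc(U)$ by $o^{*}G$ to control $u_{*}\Dqc(U)$ inside $\Coprod_{?}(f^{*}G)$.

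The one genuine difference is in how the inductive step is organized. You apply the localization triangle $Q_{F}\to F\to u_{*}u^{*}F$ to an arbitrary $F$ and then bound the $Z$-supported term $Q_{F}=i_{*}P$ directly by rerunning the mechanism of Proposition~\ref{prop3.4} (Theorem~\ref{Amnthm} on $V$, Theorem~\ref{Amn6.2} on $j_{*}G'$, then Proposition~\ref{prop3.2} applied to $P$). The paper instead uses the Mayer--Vietoris triangle $u_{*}t_{*}t^{*}u^{*}\Sigma^{-1}F\to F\to u_{*}u^{*}F\oplus i_{*}i^{*}F$ and first establishes the intermediate bound $i_{*}j^{*}G\in\Coprod_{M}(f^{*}G)$ via a separate localization triangle $H\to f^{*}G\to i_{*}j^{*}G$, feeding $H\in u_{*}\Dqc(U)$ back through the already-established control of the $U$-side; only then is $i_{*}\Dqc(V)$ bounded. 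Your route is marginally more economical in that it avoids this intermediate object and invokes Proposition~\ref{prop3.2} once for each $F$ rather than passing through $i_{*}j^{*}G$; the paper's route keeps Proposition~\ref{prop3.2} packaged inside Proposition~\ref{prop3.4} and never unpacks it again. Both yield uniform bounds, and the bookkeeping you flag (uniformity of $m,k,m',N'$, coproduct preservation by $u_{*}$ and $j_{*}$, and that $Q_{F}$ genuinely lies in $\Dqc(V\ \textbf{on}\ Z)$) is handled correctly. One minor citation slip: that $f|_{U}$ is a separator of the retrocompact open $U$ is the content of the argument inside Corollary~\ref{Cor2.5.3}, not Proposition~\ref{hopenimm}(i), which as stated applies only to separated opens.
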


\begin{proof}

First, we notice that by Lemma \ref{lem3.3}, $f^*G$ is already a compact generator. Hence, it suffices to show that it is a fast generator. We proceed by induction on the number of affines in the cover of $X$ to show that $f^*G$ is indeed a fast generator.

 The case $n=1$ means that $X$ is affine, hence separated. Therefore, the separator $f$ is an isomorphism and $f^*G = G$.
 
 Assume the theorem holds for any scheme which admits a cover by up to $n$ affines $Spec(R_i)$, each $R_i$ with finite global dimensions. Suppose that $X$ can be covered by $n+1$ affines $U_i = Spec(R_i)$, each $R_i$ with finite global dimensions, i.e., $X = \bigcup^{n+1}_{i=1} U_i $.
 
 Let $U = \bigcup^{n}_{i=1} U_i$ and $V = U_{n+1}$, so $X = U \cup V$. Assume we are in the same situation as the previous diagrams.
 
Let $G \in \Dqc (\Xs)$ be a fast generator. Since the restriction of $f$ to $U$ is also a separator, by Lemma \ref{lem3.3}, the restriction of $f^*G$ to $U$, i.e. $u^*f^*(G) = o^*(G)$ is a compact generator. By induction hypothesis, there exists $G'$ that fast generates $\Dqc (U)$. Since $G'$ is compact and is in the subcategory generated by coproducts of $o^*(G)$, without loss of generality we may take $o^*G$ to be the fast generator of $\Dqc (U)$. Now, by Proposition \ref{prop3.4}, there exist $N$ such that $u_*o^*G \in \Coprod _N(f^*G)$.
 
 In similar fashion, since $V$ is an affine open from $X$, we may take $j^*G$ as a fast generator of $\Dqc (V)$. 
 
 Using another localization sequence, one obtain the triangle 

$$
\begin{tikzcd}%
        H \arrow[r] & f^*G \arrow[r] & i_*i^*f^*G = i_*j^*G
    \end{tikzcd}%
$$ 
in $\Dqc (X)$ for $H$ not supported in $V$. That implies that there exist some $P \in \Dqc (U)$ such that $H = u_* P$. By the previous paragraph, $\Dqc (U)$ is fast generated by $o^*(G)$, which implies that $H \in Coprod_L ( u_*o^*G)$, for some $L >0$. Since $u_*o^*G \in Coprod_N(f^*G)$, one obtains that $H \in Coprod_{LN}(f^*G)$. Therefore, there exists $M  > LN > 0$ such that $i_*j^*G \in Coprod_M(f^*G)$.
 
 Let $T = U \cap V$ and $t: T \to U$ be the inclusion. Then, any object $F \in \Dqc (X)$ fits in the triangle
 
 $$
\begin{tikzcd}%
        u_*[t_*t^*u^* \Sigma^{-1} F ] \arrow[r] & F \arrow[r] & u_*[u^*F] \oplus i_*[i^*F].
    \end{tikzcd}%
$$

Thus, $F$ belongs to $[u_*( \Dqc (U)] \star [u_* \Dqc (U) \oplus i_* \Dqc(V)] $ which is contained in
\begin{align*}
Coprod _{MN}(f^*G) \star Coprod _{MN}(f^*G) = Coprod _{2MN}(f^*G)
\end{align*}

Therefore $\Dqc(X)$ is fast generated and the result follows 
\end{proof}

Theorem \ref{thm3.5} together with Remark \ref{rmk1.3} prove the main Theorem \ref{main}, restated below

\begin{theorem*}  Let $X$ be a quasicompact, quasiseparated scheme that admits a separator. Then $\textbf{D}_{\text{perf}}(X)$ is regular if, and only if, $X$ can be covered by open affine subschemes $Spec(R_i)$ with each $R_i$ of finite global dimension.
\end{theorem*}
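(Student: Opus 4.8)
The plan is to prove the two implications separately, isolating the separator hypothesis to a single direction. The forward implication---that regularity of $\Dperf(X)$ forces a cover by affines $Spec(R_i)$ of finite global dimension---makes no use of the separator and holds for every quasicompact, quasiseparated scheme; it is exactly the argument recorded in Remark \ref{rmk1.3}. So first I would simply invoke that argument: given a strong generator $G \in \Dperf(X)$ and an affine open $U = Spec(R) \subset X$, Thomason--Trobaugh identifies the restriction $j^*\colon \Dperf(X) \to \Dperf(U)$ with the idempotent completion of a Verdier quotient, hence $j^*G$ is again a strong generator, and Rouquier's Theorem 7.25 then forces $R$ to have finite global dimension.

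For the reverse implication I would reduce, via Proposition \ref{propreduc}, to producing a compact generator $H$ of $\Dqc(X)$ that \emph{fast} generates it, i.e. with $\Dqc(X) = \Coprod_n(H[-\infty,\infty])$ for a single $n$; such an $H$, being compact, then strongly generates $\Dperf(X)$, giving regularity. This existence is precisely Theorem \ref{thm3.5}: starting from Neeman's fast generator $G$ on the separated target $\Xs$ (Theorem \ref{Amnthm}, available because by Proposition \ref{hopenimm} the separator carries an affine cover of $X$ of finite global dimension to such a cover of $\Xs$), the pullback $f^*G$ along the separator $f\colon X \to \Xs$ is the desired $H$. Thus the restated theorem is assembled by combining Remark \ref{rmk1.3} with the chain Theorem \ref{Amnthm} $\Rightarrow$ Theorem \ref{thm3.5} $\Rightarrow$ Proposition \ref{propreduc}; the assembly itself is formal.

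The genuine difficulty is entirely concentrated in the reverse direction, and within it in Theorem \ref{thm3.5}. The hard part will be twofold. First, one must know that $f^*G$ is still a compact generator on the possibly non-separated $X$ (Proposition \ref{lem3.3}); I would handle this by induction on the size of an affine cover, using that the separator restricts to an isomorphism on separated opens, together with a localization-and-gluing argument that controls the locus $Z_V$ of non-separated points and reduces to the case where the overlaps avoid $Z_V$. Second, and more delicate, is the \emph{uniform finiteness} of the number of triangles: one must build both $u_*o^*G$ and $i_*j^*G$ from $f^*G$ in a bounded number of $\Coprod$-steps. Here the key mechanism is Neeman's Theorem \ref{Amn6.2}, which bounds pushforward along open immersions of separated schemes, transported across $f$ by the retract relation of Proposition \ref{prop3.2} and packaged in Proposition \ref{prop3.4}. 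The induction then glues $\Dqc(X)$ out of $\Dqc(U)$ and $\Dqc(V)$ while keeping the exponent $n$ finite---and it is this uniform bound, rather than mere generation, that is the crux of the whole argument.
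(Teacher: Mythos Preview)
Your proposal is correct and follows the paper's approach exactly: the forward implication is Remark \ref{rmk1.3}, and the reverse is Theorem \ref{thm3.5} (fed by Theorem \ref{Amnthm} on $\Xs$ and converted to strong generation of $\Dperf(X)$ via Proposition \ref{propreduc}). Your identification of the two hard ingredients inside Theorem \ref{thm3.5}---compact generation of $f^*G$ via Proposition \ref{lem3.3} and the uniform $\Coprod$-bound via Proposition \ref{prop3.2}, Proposition \ref{prop3.4}, and Theorem \ref{Amn6.2}---also matches the paper's internal structure.
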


\newpage

% \textbf{Acknowledgments:}Acknowledgments

\end{document}